\newcommand{\N}{\ensuremath{\mathbb{N}}}
\def\T{{\cal{T}}}
\def\S{{\cal{S}}}
\def\B{{\cal {B}}}
\def\max{{\rm max }}
\def\mod{{\rm mod\; }}
\def\GDD{{\sf{GDD}}}
\qed\end{trivlist}}
\newtheorem{Theorem}{Theorem}[section]
\newtheorem{Lemma}[Theorem]{Lemma}
\newtheorem{rem}[Theorem]{Remark}
\newtheorem{define}[Theorem]{Definition}
\newtheorem{exer}[Theorem]{Exercise}
\newtheorem{ex}[Theorem]{Example}
\begin{document}
\setcounter{footnote}{1}

\title{Group Divisible Designs with $\lambda_1=3$ and Large Second Index}

\medskip

\author{
Chariya Uiyyasathian\footnote{Corresponding Author} and Nataphan Kitisin\\
Department of Mathematics and Computer Science,\\ Faculty of Science, Chulalongkorn University,\\ Phyathai Rd., Patumwan,
Bangkok, 10330, THAILAND \\ {\tt chariya.u@chula.ac.th, nataphan.k@chula.ac.th}
}

\date{}

\maketitle

\begin{abstract}

 A  group divisible design \GDD$(m,n;\lambda_1,\lambda_2)$, is an ordered pair $(V, \B)$ where $V$ is an
$(m+n)$-set of symbols while $\B$ is a collection of $3$-subsets (called
blocks) of  $V$  satisfying the following properties: the $(m+n)$-set
is divided into 2 groups of size $m$ and of size $n$: each
pair of symbols from the same group occurs  in exactly $\lambda_1$
blocks in $\B$: and each pair of symbols from different groups occurs
in exactly $\lambda_2$ blocks in $\B$. $\lambda_1$ and $\lambda_2$ are referred to as first index and second index, respectively.
 Here, we focus on an existence problem of \GDD s when
$\lambda_1=3$ and $\lambda_2>3$. We obtain the necessary conditions and prove that
these conditions are sufficient for most of the cases.\\

\end{abstract}

\textbf{Keywords}: group divisible design, triple system, graph decomposition\\

\textbf{2010 Mathematics Subject Classification}: 05C15

\section{Introduction}

A  {\it group divisible design} $\GDD(v=v_1+v_2+\dots +v_g, g, k ;
\lambda_1, \lambda_2)$ is an ordered pair $(V, \B)$ where $V$ is a
$v$-set of symbols while $\B$ is a collection of $k$-subsets (called
\textit{blocks}) of  $V$  satisfying the following properties: the
$v$-set is partitioned into $g$ groups of sizes $v_1, v_2, \dots,
v_g$; each pair of symbols from the same group occurs  in exactly
$\lambda_1$ blocks in $\B$; and each pair of symbols from
different groups occurs in exactly $\lambda_2$ blocks in $\B$.

The problem of the existence of group divisible design has been
interested for quite a long time. In 1952, Bose and Shimamoto published
a work on the classification of certain designs \cite{BO}. The case when $g=2$
and $k=3$ is of such highly interest recently. From now on, we will
use the notation \GDD$(m,n;\lambda_1,\lambda_2)$ to represent
\GDD$(v=m+n,2,3;\lambda_1,\lambda_2)$).

Group divisible designs can be described graphically as follows.
Let $\lambda K_v$ denote the graph on $v$ vertices in which each
pair of distinct vertices is joined by $\lambda$ edges. Let $G_1$
and $G_2$ be vertex disjoint graphs. The graph $G_1 \vee_\lambda
G_2$ is formed from the union of $G_1$ and $G_2$ by joining each
vertex in $G_1$ to each vertex in $G_2$ with $\lambda$ edges.
Given a subgraph $G$ of a graph $H$, a {\it
$G$-decomposition} of a graph $H$ is a partition of the edge set
of $H$ such that each element of the partition induces a copy of
$G$. Thus the existence of a $\GDD(m, n; \lambda_1, \lambda_2)$ is
easily seen to be equivalent to the existence of a
$K_3$-decomposition of ${\lambda_1} K_m \vee_{\lambda_2}
{{\lambda_1}K_n}$. In particular, the case where $\lambda_1=\lambda_2=\lambda$
is equivalent to $K_3$-decomposition of $\lambda K_{m+n}$. Such result is known as $\lambda$-fold triple systems and appears in many
standard textbooks (see \cite{LR}). Series of the research articles
had been devoted to solve the problem of existence of group
divisible designs with certain parameters \GDD$(m,n;\lambda_1,\lambda_2)$
where $\lambda_1>\lambda_2$ (eg.
\cite{1L},\cite{D1},\cite{nittiya},\cite{L23}). Although the case where
$\lambda_1 <\lambda_2$ is considered more difficult, the progress has been made. The case
where $\lambda_1=1, \lambda_2=2$ and $m<\frac{n}{2}$ was
established in 2012 \cite{12}. Later on, $\lambda_1=1, \lambda_2=3$,
the problem was partially solved in 2015 \cite{13}. For $\lambda_1=2$, the problem has been solved recently for most of the cases wherever $\lambda_2>2$ \cite{14}. Furthermore, when a
\GDD$(m,n;\lambda_1,\lambda_2)$ is {\em gregarious} (each
block in the design contains elements from both groups), El-Zanati
et al. found all $(m,n)$ for such gregarious \GDD$(m,n;1,2)$ to exist
\cite{12G}. Up to date, no other result where
$\lambda_1<\lambda_2$ has been known.

In this paper we continue along this line of work. In particular,
we solve the existence problem of a $\GDD(m, n; 3, \lambda)$, where
$\lambda > 3$. First we note that The existence of a $K_3$-decomposition of ${\lambda_1} K_m
\vee_{\lambda_2} {{\lambda_1}K_n}$ yields the necessary
conditions of the existence of our designs.

For sufficiency, we will provide the constructions using graph
decompositions. One of the major tools employed here
is the result from the Alspach's problem \cite{AL}. In 1981, Alspach asked
whether it is possible to decompose the complete graph on $n$
vertices, $K_n$, into $t$ cycles of specified lengths
$m_1, . . . , m_t$ given that the obvious necessary conditions are
satisfied. It turns out that a decomposition of $K_n$ into $3$-cycles is equivalent to a Steiner triple system of order $n$ (see \cite{LR}).
Also, there have been many papers on the case where the lengths of the cycles in the decomposition may vary.
Balister \cite{BA} has verified by a computer that Alspach's problem holds for $n\leq 14$. After series of research articles, Bryant, Horsley and
Pettersen finally solved the problem in \cite{HC} as the result of
the following theorem.

\begin{Theorem}\label{HC}\textnormal{\cite{HC}}
\begin{enumerate}
\item There is a decomposition ${G_1,G_2,...,G_t}$ of $K_n$ in
which $G_i$ is an $m_i$-cycle for $i=1,2,...,t$ if and only if $n$
is odd, $3\leq m_i\leq n$ for $i=1,2,...,t$ and
$m_1+m_2+...+m_t=\frac{n(n-1)}{2}$. \item There is a decomposition
${G_1,G_2,...,G_t,I}$ of $K_n$ in which $G_i$ is an $m_i$-cycle
for $i=1,2,...,t$ and $I$ is a $1$-factor if and only if $n$ is
even, $3\leq m_i\leq n$ for $i=1,2,...,t$ and
$m_1+m_2+...+m_t=\frac{n(n-2)}{2}$.
\end{enumerate}
\end{Theorem}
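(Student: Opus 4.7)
The plan is to prove both directions, with necessity being essentially a counting argument and sufficiency being the genuinely hard part. For part $(1)$, each $m_i$-cycle contributes exactly $m_i$ edges, so $\sum_i m_i = |E(K_n)| = n(n-1)/2$ is forced; the bound $3\le m_i \le n$ is immediate from the definition of a cycle inside a graph on $n$ vertices; and since every vertex of $K_n$ has degree $n-1$ and every vertex in any cycle has even cycle-degree, $n-1$ must be even, forcing $n$ odd. For part $(2)$, the $1$-factor contributes degree $1$ at every vertex, so the cycles must supply the remaining even degree $n-2$, which is possible only when $n$ is even; the edge count then becomes $n(n-1)/2 - n/2 = n(n-2)/2$.

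For sufficiency I would attempt induction, primarily on $t$ and secondarily on $n$. Two anchoring families of base cases are already in the literature and deserve to be invoked: Walecki's classical construction decomposes $K_n$ (respectively $K_n$ minus a $1$-factor) into Hamilton cycles when $n$ is odd (respectively even), covering the extreme case where every $m_i = n$, while the Alspach--Gavlas--\v{S}ajna type theorems cover the case of uniform length $m_1=\cdots=m_t=m$. For the inductive step, given a feasible multiset $\{m_1,\dots,m_t\}$, I would try to peel off one cycle---say of the largest length $m_t$---from $K_n$ so that the residue decomposes into cycles of lengths $m_1,\dots,m_{t-1}$. Since the residue is no longer complete, the induction hypothesis has to be broadened to decompositions of $K_n$ with the edge set of some controlled subgraph removed, rather than decompositions of $K_n$ alone.

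The main obstacle, and presumably the reason the problem resisted proof for three decades after Alspach posed it, is controlling the interplay between short and long cycles. Short cycles are rigid and leave little room for local adjustment, while long cycles span and constrain the global structure, and a naive greedy induction tends to strand a handful of edges in configurations that cannot be completed. I expect the bulk of the work to lie in a suite of local switching lemmas---taking two partial cycles that share a vertex or a short path and rerouting edges to exchange a pair of cycles of lengths $a,b$ for a pair of lengths $a',b'$ with $a+b=a'+b'$---together with careful treatment of a limited set of exceptional small-$n$ configurations, most of which can be dispatched by the computer verification of Balister for $n\le 14$ cited in the introduction. Finally, one must verify throughout that each construction step preserves the range constraint $3\le m_i\le n$ and the parity dichotomy in $n$.
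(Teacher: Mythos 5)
This statement is not proved in the paper at all: it is Theorem \ref{HC}, quoted verbatim from Bryant, Horsley and Pettersson \cite{HC} as an external tool, so there is no in-paper argument to compare yours against. Your necessity direction is fine and matches the standard counting/parity argument (edge count forces $\sum m_i$, even degree at each vertex forces $n$ odd in part (1) and forces the $1$-factor in part (2)), and that half genuinely is routine.

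The sufficiency direction, however, is where your proposal has a real gap: what you have written is a research programme, not a proof. "Peel off the largest cycle and induct on a broadened hypothesis about $K_n$ minus a controlled subgraph," "a suite of local switching lemmas," and "careful treatment of exceptional small-$n$ configurations" are precisely the ingredients that Bryant, Horsley and Pettersson (and their many predecessors) spent roughly a hundred pages across several papers making precise --- identifying which reduced graphs the induction must pass through, proving the switching lemmas actually terminate, and showing no feasible multiset $\{m_1,\dots,m_t\}$ gets stranded. None of that is supplied here, and there is no reason to believe a naive version of the peeling induction closes without those technical lemmas; indeed you correctly note that greedy peeling strands edges. Since the paper itself only cites this theorem, the appropriate move is to do the same: state the necessity argument if you wish, and attribute sufficiency to \cite{HC} rather than sketching it as if it were provable in a paragraph.
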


\section{Necessary Conditions}

As previous mentioned, the existence of a $K_3$-decomposition of ${\lambda_1} K_m
\vee_{\lambda_2} {{\lambda_1}K_n}$ provides some necessary conditions for the existence of the design \GDD$(m,n;\lambda_1,\lambda_2)$. In particular, we have three necessary conditions for the existence of \GDD$(m,n;3,\lambda)$ in the following theorem. Note that a $K_3$ or $3$-cycle in a graph is called a {\em triangle}.

\begin{Theorem}\label{NC}
Let $(m,n,\lambda)\in\N^3$, if \GDD$(m,n;3,\lambda)$ exists then $m,n$ and $\lambda$ must satisfy the following:
\begin{description}[noitemsep]
\item[     (NC1)]  $3\mid \lambda mn$,
\item[     (NC2)]  $2\mid n-1+\lambda m$ and $2\mid m-1+\lambda n$, and
\item[     (NC3)]  $\frac{\lambda}{3}\leq \frac{m-1}{n}+\frac{n-1}{m}$.
\end{description}
\end{Theorem}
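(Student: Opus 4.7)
The plan is to exploit the graph-theoretic reformulation already discussed in the introduction: a \GDD$(m,n;3,\lambda)$ exists if and only if the graph $H := 3K_m \vee_\lambda 3K_n$ admits a $K_3$-decomposition. Each of the three necessary conditions will then be extracted from a standard counting argument about such a decomposition (total edge count, vertex degrees, and an edge-consumption count for mixed triangles).

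For \textbf{(NC1)}, I would compute the total number of edges of $H$, namely
\[
|E(H)| = 3\binom{m}{2} + 3\binom{n}{2} + \lambda mn.
\]
Since every triangle in a $K_3$-decomposition contains exactly three edges, $3 \mid |E(H)|$ is necessary. The first two terms are already divisible by $3$, so this forces $3 \mid \lambda mn$.

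For \textbf{(NC2)}, I would use the classical parity obstruction: in any $K_3$-decomposition of a graph, every vertex degree must be even (each triangle through a vertex uses two of its incident edges). A vertex in the group of size $m$ has degree $3(m-1) + \lambda n$ in $H$, and a vertex in the group of size $n$ has degree $3(n-1) + \lambda m$. Reducing modulo $2$ (using $3 \equiv 1$) gives the two stated conditions $2 \mid m-1+\lambda n$ and $2 \mid n-1+\lambda m$.

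For \textbf{(NC3)}, which is the least automatic of the three and the main obstacle, I would classify the triangles of the decomposition by how their vertices are distributed between the two groups. A \emph{pure} triangle (all three vertices in the same group) uses three intra-group edges and no cross edges, while a \emph{mixed} triangle (two vertices in one group and one in the other) uses exactly one intra-group edge and two cross edges. Since $H$ contains precisely $\lambda mn$ cross edges and each mixed triangle consumes two of them, the number of mixed triangles equals $\lambda mn /2$. But each such triangle also consumes one edge of either $3K_m$ or $3K_n$, so the number of mixed triangles cannot exceed the total number of intra-group edges:
\[
\tfrac{\lambda mn}{2} \;\leq\; 3\binom{m}{2} + 3\binom{n}{2} \;=\; \tfrac{3m(m-1)+3n(n-1)}{2}.
\]
Dividing by $mn$ and rearranging yields exactly $\lambda/3 \leq (m-1)/n + (n-1)/m$. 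The only delicate point is recognizing that every mixed triangle must use one intra-group edge, which immediately gives the crucial upper bound; the algebraic manipulation into the stated form is then routine.
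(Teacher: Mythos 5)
Your proposal is correct and follows essentially the same route as the paper: edge-count divisibility for (NC1), the even-degree obstruction for (NC2), and counting the $\lambda mn/2$ mixed triangles forced by the cross edges for (NC3). The only cosmetic difference is in (NC3), where you bound the number of mixed triangles by the number of intra-group edges while the paper bounds it by the total number of triangles; the two inequalities are algebraically identical, and your phrasing arguably makes the mechanism (each mixed triangle consumes one intra-group edge) slightly more transparent.
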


\begin{proof} Since there exists a $K_3$-decomposition of the graph $3K_m
\vee_{\lambda} {3K_n}$, the number of total edges in the graph must be
divisible by $3$ which yields (NC1). Besides, every vertex in the graph with a $K_3$-decomposition must have even degree. Now a vertex in group $M$, and $N$ is of degree $3(m-1)+\lambda n$, and $3(n-1)+\lambda m$, respectively. Thus (NC2) follows.

Since the number of triangles in the entire decomposition must be greater than the number of
triangles which contain the vertices from both groups. There are $\frac{1}{6} (3m(m-1)+3n(n-1)+2\lambda mn)$ triangles in the entire decomposition and
there are $\frac{\lambda mn}{2}$ triangles that contain vertices from both groups. Hence $\frac{1}{6} (3m(m-1)+3n(n-1)+2\lambda mn) \geq \frac{\lambda mn}{2}$
which is equivalent to (NC3).
\end{proof}

Our goal for the rest of the paper is to consider whether these three necessary conditions, (NC1) to (NC3), for the existence of a \GDD$(m,n;3,\lambda)$ are sufficient.
First, we let $\S$ be the set of all ordered
triples $(m,n,\lambda)$ that satisfies only two necessary
conditions (NC1) and (NC2) which are $3\mid \lambda mn$, $2\mid n-1+\lambda m$ and $2\mid m-1+\lambda n$ .\\

Note that the necessary condition (NC2) which consists of $2\mid n-1+\lambda m$ and $2\mid m-1+\lambda n$ implies the following two statements:
\begin{enumerate}[noitemsep]
\item  $\lambda$ is even if and only if both $m$ and $n$ are odd.
\item $\lambda$ is odd if and only if $m\not\equiv n(\mod2)$.
\end{enumerate}

Together with the necessary condition (NC1), $S$ can be described explicitly in the form of a table as follows.
{\small
\begin{center}
\begin{tabular}{|c||c|c|c|c|}
\hline
   & $n\equiv 0\pmod6$ & $n\equiv1,5\pmod6$ & $n\equiv2,4\pmod6$ & $n\equiv3\pmod6$  \\ \hline \hline
  $m\equiv0\pmod6$ &                       &  $\lambda$ is odd &                       & $\lambda$ is odd  \\\hline
  $m\equiv1,5\pmod6$ & $\lambda$ is odd & $\lambda\equiv 0\pmod6$     & $\lambda\equiv 3\pmod6$     & $\lambda$ is even \\\hline
  $m\equiv2,4\pmod6$ &                       & $\lambda\equiv 3\pmod6$     &                       & $\lambda$ is odd \\\hline
  $m\equiv3\pmod6$ & $\lambda$ is odd & $\lambda$ is even & $\lambda$ is odd & $\lambda$ is even  \\\hline

\end{tabular}
\end{center}
}

We will try to give a construction of a \GDD$(m,n;3,\lambda)$ for each $(m,n,\lambda)\in \S$ in the next section, and successfully prove the sufficient conditions for most of the cases.

\section{Sufficient Conditions}

We will rely on the notion of a certain graph decomposition in our construction. Recall that {\it Hamiltonian cycle} is a cycle that visits every vertex of a graph exactly once. The next two lemmas provide the conditions of the existence of a decomposition of $K_v$ into a collection of $k$ Hamiltonian cycles and a collections of triangles for given $v$ and $k$. These results are consequences of Theorem \ref{HC} and will be the crucial tools for our main constructions.

\begin{Lemma}\label{oddKv}
Let $v, k \in \N$ be such that $v$ is odd and $1\leq k\leq \frac{v-1}{2}$. The necessary and sufficient condition for the existence of a decomposition of $K_v$ into $k$ Hamiltonian cycles and a collection of triangles
are as following table:

\begin{center}
\begin{tabular}{|c|c|c|c|c|c|c|}
\hline
  $v$ & $k$  \\ \hline\hline
  \textnormal{$v\equiv 1 (\mod 6)$} & \textnormal{ $k\equiv 0 (\mod 3)$}   \\\hline
  \textnormal{$v\equiv 3 (\mod 6)$} & \textnormal{all $k$}  \\\hline
  \textnormal{$v\equiv 5 (\mod 6)$} & \textnormal{ $k\equiv 2 (\mod 3)$}   \\\hline
\end{tabular}
\end{center}
\end{Lemma}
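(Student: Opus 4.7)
The plan is to handle necessity by an edge count and then obtain sufficiency essentially for free from Theorem \ref{HC}(1), since $v$ is odd.

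For necessity, I would start from the fact that $K_v$ has $\binom{v}{2}=v(v-1)/2$ edges, of which $k$ Hamiltonian cycles consume exactly $kv$ edges. The leftover $v((v-1)/2-k)$ edges must partition into triangles, so one needs
\[
3 \;\Big|\; v\!\left(\tfrac{v-1}{2}-k\right).
\]
I would then split into three subcases according to $v \pmod 6$. When $v\equiv 3\pmod 6$, $3\mid v$ and the condition is automatic. When $v\equiv 1\pmod 6$, $\gcd(v,3)=1$ while $(v-1)/2\equiv 0\pmod 3$, forcing $k\equiv 0\pmod 3$. When $v\equiv 5\pmod 6$, again $\gcd(v,3)=1$ but now $(v-1)/2\equiv 2\pmod 3$, forcing $k\equiv 2\pmod 3$. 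This exactly reproduces the table.

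For sufficiency, assuming $(v,k)$ satisfies the table entry, I would apply Theorem \ref{HC}(1) directly. Set
\[
t \;=\; \frac{1}{3}\Bigl(\tfrac{v(v-1)}{2}-kv\Bigr),
\]
which is a non-negative integer precisely because $k\leq (v-1)/2$ and the divisibility condition from the table holds. Now form the list of desired cycle lengths consisting of $k$ copies of $v$ (the Hamiltonian cycles) followed by $t$ copies of $3$ (the triangles). Each entry lies in $[3,v]$ (using $v\geq 5$ in the only edge cases, as $v=3$ is trivial), and by construction the lengths sum to $v(v-1)/2$. Since $v$ is odd, Theorem \ref{HC}(1) yields a decomposition of $K_v$ into cycles of precisely these lengths, i.e., $k$ Hamiltonian cycles and $t$ triangles, as required.

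I do not expect any real obstacle: the arithmetic in the necessity argument is the only computation, and sufficiency reduces immediately to the Bryant--Horsley--Pettersen theorem. The one minor thing to be careful about is verifying that the list of cycle lengths is non-empty and that every prescribed length is a legal value ($3\leq m_i\leq v$), both of which follow from $1\leq k\leq (v-1)/2$ and $v\geq 3$.
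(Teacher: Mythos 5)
Your proposal is correct and follows essentially the same route as the paper: the divisibility condition $3\mid\bigl(\tfrac{v(v-1)}{2}-kv\bigr)$ from an edge count gives necessity, and sufficiency comes directly from Theorem \ref{HC}(1) by prescribing $k$ cycles of length $v$ and the rest of length $3$. You have simply written out the case analysis modulo $6$ and the verification of the hypotheses of Theorem \ref{HC} more explicitly than the paper does.
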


\begin{proof}
Assume that $K_v$ can be decomposed into $k$ Hamiltonian cycles and a collection of triangles. By Theorem \ref{HC}, since $K_v$ has $\frac{v(v-1)}{2}$ edges and $k$ Hamiltonian cycles contains $kv$ edges, the remaining edges must be divisible by 3. Therefore, we have $3\mid (\frac{v(v-1)}{2}-kv)$. The statement follows from this condition.
\end{proof}

\begin{Lemma}\label{evenKv}
Let $v, h \in \N$ be such that $v$ is even and $1\leq h\leq \frac{v-2}{2}$. The necessary and sufficient condition for the existence of a decomposition of $K_v$ into $h$ Hamiltonian cycles, one $1$-factor and a collection of triangles are as following table:

\begin{center}
\begin{tabular}{|c|c|c|c|c|c|c|}
\hline
  $v$ & $h$  \\ \hline\hline
  \textnormal{$v\equiv 0 (\mod 6)$} & \textnormal{all $h$}   \\\hline
  \textnormal{$v\equiv 2 (\mod 6)$} & \textnormal{ $h\equiv 0,3\pmod 6$}  \\\hline
  \textnormal{$v\equiv 4 (\mod 6)$} & \textnormal{ $h\equiv 1,4 \pmod 6$}   \\\hline
\end{tabular}
\end{center}
\end{Lemma}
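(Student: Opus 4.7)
The plan is to mirror the proof of Lemma \ref{oddKv}, but invoke part 2 of Theorem \ref{HC} rather than part 1, since $v$ is now even. First I would compute the number of edges left for the triangles once the $h$ Hamiltonian cycles and one $1$-factor have been peeled off from $K_v$:
\[
\frac{v(v-1)}{2} - hv - \frac{v}{2} \;=\; \frac{v(v-2-2h)}{2}.
\]
This number must be a nonnegative multiple of $3$; nonnegativity follows from the hypothesis $h\leq \frac{v-2}{2}$, so the whole problem reduces to divisibility by $3$.

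Next I would split into cases modulo $6$. If $v\equiv 0\pmod 6$, then $3\mid v/2$ and divisibility is automatic, so every $h$ is allowed. If $v\equiv 2\pmod 6$, write $v=6t+2$ and factor the edge count as $2(3t+1)(3t-h)$; since $\gcd(2,3)=\gcd(3t+1,3)=1$, divisibility forces $h\equiv 0\pmod 3$, i.e.\ $h\equiv 0$ or $3\pmod 6$. If $v\equiv 4\pmod 6$, write $v=6t+4$ and factor as $2(3t+2)(3t+1-h)$, forcing $h\equiv 1\pmod 3$, i.e.\ $h\equiv 1$ or $4\pmod 6$. These reproduce the three rows of the table and establish necessity.

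For sufficiency, having verified the divisibility, I would set $t=\frac{v(v-2-2h)}{6}$, which is a nonnegative integer, and apply Theorem \ref{HC}(2) to the multiset consisting of $h$ prescribed cycle lengths equal to $v$ and $t$ prescribed cycle lengths equal to $3$. All lengths lie in $[3,v]$ since $v\geq 4$, and they sum to $hv+3t=\frac{v(v-2)}{2}$, so the theorem supplies a decomposition of $K_v$ into the desired $h$ Hamiltonian cycles, $t$ triangles, and one $1$-factor.

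I do not anticipate a real obstacle; the argument is a direct parallel to Lemma \ref{oddKv}. The only point worth stating carefully is the equivalence between the arithmetic conditions "$h\equiv 0\pmod 3$" (respectively "$h\equiv 1\pmod 3$") and the table entries "$h\equiv 0,3\pmod 6$" (respectively "$h\equiv 1,4\pmod 6$"), which holds automatically since $h$ is permitted to take values of both parities in the allowed range.
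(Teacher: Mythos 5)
Your proposal is correct and follows essentially the same route as the paper: both reduce the problem to the divisibility condition $3\mid\bigl(\frac{v(v-1)}{2}-hv-\frac{v}{2}\bigr)$ and rely on Theorem \ref{HC}(2) for sufficiency. You simply carry out the mod-$6$ case analysis and the sufficiency check explicitly, which the paper leaves implicit in ``the statement follows from this condition.''
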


\begin{proof}
Assume that $K_v$ can be decomposed into $h$ Hamiltonian cycles, one $1$-factor, and a collection of triangles. By Theorem \ref{HC}, since $K_v$ has $\frac{v(v-1)}{2}$ edges, and $h$ Hamiltonian cycles and one $1$-factor contains a total of $hv+\frac{v}{2}$ edges, the remaining edges must be divisible by 3. Therefore, we have $3\mid (\frac{v(v-1)}{2}-hv-\frac{v}{2})$. The statement follows from this condition.
\end{proof}

\noindent From now on, the following notations will be used for our constructions.
\begin{enumerate}

\item Since we will be dealing with multi-sets (where each element is allowed to appear more than once), $``\cup"$ in our construction will mean that the union of multi-sets.

\item For a set $S$ with $v$ elements, we use the notation $K_v(S)$ for a complete graph of order $v$
with vertex set $S$.

\item Let $G$ be a graph and $p$ be a vertex not in $G$. By $p*G$ we mean the set of triples $\{\{p,v_1,v_2\}\mid v_1v_2\in E(G)\}.$
\end{enumerate}

Remark that for each $v\in V(G)$, $p$ and $v$ will be together in $d_G(v)$ triples in $p*G$. For each pair of
$v_1,v_2\in V(G)$, $v_1,v_2$ will be together in $t$ triples in $p*G$ where $t$ is the number of edges
between $v_1$ and $v_2$ in $G$.\\

\noindent\textbf{The star construction between a graph $G$ and a set $A$}. Let $A=\{a_1,a_2,\ldots,a_{|A|}\}$.
 \begin{enumerate}
 \item[(I)] Suppose that $G$ has a decomposition into a collection of triangles $\T_1$ and $k=|A|\lambda$ Hamiltonian cycles $H_{i,j}$, for $i\in\{1,2,\ldots,|A|\}$, $j\in\{1,2,\ldots,\lambda\}$. The star construction between $G$ and $A$ provides a collection of blocks $\T_1\cup\B$ where $\B$ is the union of the set $a_i*H_{i,j}$ for $i\in\{1,2,$ $\ldots,$ $|A|\}$, $j\in\{1,2,\ldots,\lambda\}$. Then each element in $A$ and each vertex in $G$ are together in exactly $2\lambda$ blocks in $\T_1\cup\B$.

 \item[(II)] Suppose that $G$ has a decomposition into a collection of triangles $\T_2$, $|A|$ 1-factors $F_i$ for $i\in\{1,2,\ldots,|A|\}$ and $k=|A|\lambda$ Hamiltonian cycles $H_{i,j}$ for $i\in\{1,2,\ldots,|A|\}$, $j\in\{1,2,\ldots,\lambda\}$. The star construction between $G$ and $A$ provides a collection of blocks  $\T_2\cup\B_1\cup\B_2$ where $\B_1$ is the union of the set $a_i*H_{i,j}$ for $i\in\{1,2,\ldots,|A|\}$, $j\in\{1,2,\ldots,\lambda\}$ and $\B_2$ is the union of the set $a_i*F_i$ for $i\in\{1,2,\ldots,|A|\}$. Then each element in $A$ and each vertex in $G$ are together in exactly $2\lambda+1$ blocks in $\T_2\cup\B_1\cup\B_2$.
\end{enumerate}

For the rest of the paper, $M$ and $N$ will denote the disjoint sets such that $M$ has $m$ elements and $N$ has $n$ elements, and we always assume that $m>n$. Since we are interested only in $\GDD$ with two groups, we will repeatedly use $M$ and $N$ as our groups. Since we consider only \GDD with $\lambda_1=3$ and $\lambda_1<\lambda_2$, we always have $\lambda_2\geq 4$. Given any pair of $m$ and $n$, we denote the maximum value of $\lambda$ satisfying (NC1)-(NC3) by $\lambda_{max}(m,n)$, more explicitly,

$$\lambda_{max}(m,n)=\max\{\lambda : (m,n,\lambda)\in \S, \mbox{ and } \frac{\lambda}{3}\leq \frac{m-1}{n}+\frac{n-1}{m}\}.$$

The next theorem (see more details in any design theory book e.g. \cite{LR}),
 guarantees that we can decompose $3K_v$ into a collection of triangles whenever $v$ is odd.
\begin{Theorem}\textnormal{\cite{LR}}\label{3Kn}
There exists a $K_3$-decomposition of $3K_v$ if any only if $v$ is an odd integer.
\end{Theorem}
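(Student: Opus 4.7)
For necessity, I would observe that in any $K_3$-decomposition of a graph $H$, every vertex of $H$ must have even degree (each triangle through a vertex $v$ contributes $2$ to $\deg_H(v)$) and $|E(H)|$ must be divisible by $3$. In $3K_v$, every vertex has degree $3(v-1)$ and there are $\tfrac{3v(v-1)}{2}$ edges; divisibility by $3$ holds automatically, while evenness of $3(v-1)$ forces $v$ to be odd.

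For sufficiency I would split on the residue of $v$ modulo $6$. If $v\equiv 1$ or $3\pmod 6$, Kirkman's theorem yields a Steiner triple system $\STS(v)$, and the disjoint union of three copies of its block set is a $K_3$-decomposition of $3K_v$ (each pair, originally covered once, now appears in exactly $3$ triples). The case $v\equiv 5\pmod 6$ is more delicate, since no $\STS(v)$ exists. For the base value $v=5$, the collection of all $\binom{5}{3}=10$ triples on a $5$-set works, because each pair is contained in exactly $v-2=3$ of them, matching the edge multiplicity of $3K_5$. For larger $v\equiv 5\pmod 6$, I would write $v=5+6k$ and combine the $v=5$ template with a recursive GDD-type construction (or appeal directly to Hanani's existence theorem for $\lambda$-fold triple systems specialized to $\lambda=3$) to produce a $3$-fold triple system of order $v$.

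The main obstacle is the $v\equiv 5\pmod 6$ case: the other two odd residues reduce immediately to Kirkman by triplication, but for $v\equiv 5$ one must carry out a genuine construction since no underlying $\STS(v)$ is available. Once the $v=5$ base design is in hand, the remaining recursive step amounts to bookkeeping, provided each construction step preserves the pair-coverage count of exactly $3$.
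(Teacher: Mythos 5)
The paper offers no proof of this statement at all: it is quoted verbatim from Lindner and Rodger's \emph{Design Theory} \cite{LR} as a known classical fact, so there is nothing internal to compare against. Your outline is the standard textbook argument and the parts you actually carry out are correct: the necessity direction (even degree $3(v-1)$ forces $v$ odd, and $3\mid\frac{3v(v-1)}{2}$ is automatic), the triplication of an $\STS(v)$ for $v\equiv 1,3\pmod 6$, and the observation that all $\binom{5}{3}=10$ triples of a $5$-set cover each pair exactly $3$ times. The one place where your proof is not self-contained is exactly the place you flag: general $v\equiv 5\pmod 6$. Saying you ``would'' combine the $v=5$ template with a recursive \GDD-type construction is a plan, not a proof --- the recursion needs ingredient designs (e.g.\ a $3$-fold \GDD\ with appropriate group sizes, or a \PBD-closure argument) that you have not exhibited, and it is not mere bookkeeping. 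Appealing to Hanani's theorem (or to \cite{LR}, where the $v\equiv 5\pmod 6$ case is handled by a direct construction from a commutative quasigroup of odd order) closes the gap legitimately, and puts you on the same footing as the paper, which likewise discharges the whole statement by citation. If you want a fully self-contained proof, the missing piece is that one explicit $6k+5$ construction; everything else you wrote stands.
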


%%%%%%%%%%%%%%%%%%%%%%%%%%%%%%%%%%%%%%%%%%%%%%%%%%%%%%%%%%%%%%%%%%%%%%%%%%%%
We are now in the position to construct our designs \GDD$(m,n;3,\lambda)$, when $m>n$. There are two variations of constructions; for odd $n$ (in Theorem \ref{nodd}), and for even $n$ (in Theorems \ref{oddeven1} and \ref{oddeven2}). First, we start with an odd $n$. Observe that Lemmas \ref{lemmaoddodd} and \ref{lemmaevenodd} are for odd $m$ and even $m$, respectively.

\begin{Lemma}\label{lemmaoddodd}
Let $\lambda\geq4$ and $m>n$ be positive integers such that $m$ and $n$ are odd, and $(m,n,\lambda)\in \S$. If $\lambda \leq \lfloor3(\frac{m-1}{n})\rfloor$, then there exists a \GDD$(m,n;3,\lambda)$.

\end{Lemma}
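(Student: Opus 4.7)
I plan to assemble the design by combining three ingredients: a triangle decomposition of $3K_n$ on the group $N$; a decomposition of $3K_m$ on the group $M$ into $k$ Hamiltonian cycles together with a triangle collection; and the type-(I) star construction, which converts those Hamiltonian cycles into the cross triples that realise the $\lambda$-fold $M$--$N$ edges.

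Since $n$ is odd, Theorem~\ref{3Kn} gives a triangle decomposition of $3K_n(N)$ at once, so every within-$N$ pair will appear in exactly $3$ blocks. Because $(m,n,\lambda)\in\S$ with both $m$ and $n$ odd, the table of $\S$ forces $\lambda$ to be even; the integer $k:=n\lambda/2$ is therefore well defined, and the hypothesis $\lambda\le\lfloor 3(m-1)/n\rfloor$ rewrites as $k\le 3(m-1)/2$, which is the obvious upper bound on the number of edge-disjoint Hamiltonian cycles that $3K_m$ can accommodate.

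For the main step, I split $3K_m(M)$ into three edge-disjoint copies of $K_m(M)$ and apply Lemma~\ref{oddKv} to each copy independently. Concretely, I pick nonnegative integers $k_1,k_2,k_3$ with $k_1+k_2+k_3=k$ and $k_i\le(m-1)/2$, each satisfying the residue condition of Lemma~\ref{oddKv} dictated by $m\bmod 6$; when $k_i=0$ and $m\equiv 1,3\pmod 6$, the corresponding copy of $K_m$ is decomposed into triangles by a Steiner triple system. A short case check confirms such a splitting always exists: condition NC1 forces $k$ to have the correct residue modulo~$3$, and the bound $k\le 3(m-1)/2$ provides enough room to distribute the $k_i$'s within their ranges. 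Gluing the three resulting decompositions yields a triangle collection $\T_1$ together with $k$ Hamiltonian cycles $H_1,\ldots,H_k$ that partition the edge multiset of $3K_m(M)$.

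The last step applies the star construction~(I) with $G=3K_m(M)$ and $A=N$, taking $\lambda/2$ as the construction parameter; its demand of $|A|\cdot(\lambda/2)=k$ Hamiltonian cycles on $G$ is met exactly. The resulting block family $\T_1\cup\B$ covers every within-$M$ pair at multiplicity $3$ (an $M$-edge of $3K_m$ is either consumed by a triangle of $\T_1$ or by a cross triple coming from a Hamiltonian cycle, and the two contributions sum to $3$) and every cross pair at multiplicity $2\cdot(\lambda/2)=\lambda$. Adjoining the triangle decomposition of $3K_n(N)$ from the first step yields the required \GDD$(m,n;3,\lambda)$. I expect the main technical hurdle to be the residue bookkeeping of the middle step, particularly the subcase $m\equiv 5\pmod 6$ where every $k_i$ must be $\equiv 2\pmod 3$ and hence at least $2$; this forces $k\ge 6$, which has to be validated against $\lambda\ge 4$ and a mild lower bound on $n$ to ensure that the splitting into $k_1,k_2,k_3$ is genuinely available.
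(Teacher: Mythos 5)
Your proposal follows essentially the same route as the paper's proof: a triangle decomposition of $3K_n(N)$ from Theorem~\ref{3Kn}, a decomposition of the three copies of $K_m(M)$ into $k=\frac{n\lambda}{2}$ Hamiltonian cycles plus triangles via Lemma~\ref{oddKv} with the same residue bookkeeping (using $3\mid k$ in the cases $m\equiv 1,5\pmod 6$), and the star construction (I) to generate the cross blocks. The only substantive difference is that you explicitly flag the small-$k$ obstruction when $m\equiv 5\pmod 6$ (each $a_i\equiv 2\pmod 3$ forces $a_i\geq 2$ and hence $k\geq 6$), a genuine edge case that the paper's own proof passes over silently.
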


\begin{proof}

It suffices to decompose $3K_m(M)$ into a collection of triangles and $k_1=\frac{n\lambda}{2}$ Hamiltonian cycles, $\frac{\lambda}{2}$ of which for each $n\in N$. Then the star construction (I) between $3K_m(M)$ and the set $N$, together with a $K_3$-decomposition of $3K_n(N)$ as $n$ is odd (exists by Theorem \ref{3Kn}), provides a \GDD$(m,n;3,\lambda)$.

 When both $m$ and $n$ are odd, $\lambda$ is always even, and hence $\frac{n\lambda}{2}$ is an integer. Moreover, when $m\equiv 1$ or $5\pmod6$; if $n\equiv 3\pmod 6$, $3|k_1$. Otherwise, $\lambda\equiv 0\pmod 6$, and hence $k_1=\frac{n\lambda}{2}$ is also divisible by $3$. Since $\lambda \leq \lfloor3(\frac{m-1}{n})\rfloor$, we have  $k_1=\frac{n\lambda}{2}\leq 3(\frac{m-1}{2})$ which means that there are enough Hamiltonian cycles available in $3K_m(M)$. However, it is needed to consider whether it is possible to decompose each of three copies of $K_m(M)$ into our desired decomposition.

If $m\equiv 3\pmod6$, the decomposition is given immediately by Lemma \ref{oddKv}.
If $m\equiv 5\pmod6$, by Lemma \ref{oddKv}, we need to split $k_1$ into $a_1,a_2,$ and $a_3$ for each $K_m(M)$ where $k_1=a_1+a_2+a_3$ and $a_i\equiv 2\pmod 3$, which can be done because $3|k_1$ and $\frac{m-1}{2}\equiv 2 \pmod 3$. Thus the decomposition is valid for all $k_1$.

If $m\equiv 1\pmod6$, then $\frac{m-1}{2}\equiv 0 \pmod 3$. Similarly to the previous case, the decomposition can be done.
\end{proof}

\begin{Lemma}\label{lemmaevenodd}
Let $\lambda\geq4$ and $m>n$ be positive integers such that $m$ is even and $n$ is odd. Let $(m,n,\lambda)\in \S$. If $\lambda \leq \lfloor3(\frac{m-1}{n})\rfloor$, then there exists a \GDD$(m,n;3,\lambda)$.
\end{Lemma}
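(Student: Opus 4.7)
The plan mirrors Lemma \ref{lemmaoddodd}, but employs star construction (II) in place of (I) to account for the odd vertex-degree $m-1$ of each copy of $K_m(M)$, which forces at least one $1$-factor in any triangle-decomposable decomposition. Since $m$ is even and $n$ is odd, the parity rules following (NC2) make $\lambda$ odd; writing $\lambda=2\lambda'+1$ with $\lambda'=(\lambda-1)/2$, I will decompose $3K_m(M)$ into a collection of triangles, $n$ $1$-factors $F_1,\ldots,F_n$, and $k_2:=n\lambda'$ Hamiltonian cycles $H_{i,j}$ (for $i\in\{1,\ldots,n\}$, $j\in\{1,\ldots,\lambda'\}$). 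Assigning $F_i$ and $H_{i,1},\ldots,H_{i,\lambda'}$ to each $a_i\in N$, star construction (II) together with a $K_3$-decomposition of $3K_n(N)$ (guaranteed by Theorem \ref{3Kn}) then delivers a \GDD$(m,n;3,\lambda)$.

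The engine for the decomposition of $3K_m(M)$ is the fact that, because $m$ is even, any Hamiltonian cycle of $K_m$ is the edge-disjoint union of two $1$-factors (alternate edges around the cycle). So applying Lemma \ref{evenKv} separately to each of the three copies of $K_m$ produces one $1$-factor, $h_i$ Hamiltonian cycles, and triangles in copy $i$; converting $(n-3)/2$ of the resulting Hamiltonian cycles into pairs of $1$-factors lifts the $1$-factor count from $3$ to $n$. Matching the remaining Hamiltonian cycles to $k_2$ imposes $h_1+h_2+h_3=(n\lambda-3)/2$, and the hypothesis $\lambda\leq\lfloor 3(m-1)/n\rfloor$ is precisely what bounds this total by $3(m-2)/2$, the Hamiltonian-cycle capacity across the three copies.

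The principal obstacle is the residue-class bookkeeping needed to pick the $h_i$. When $m\equiv2\pmod6$ (respectively $m\equiv4$) Lemma \ref{evenKv} restricts each $h_i$ to $\{0,3\}$ (respectively $\{1,4\}$) modulo $6$, so the achievable sums $h_1+h_2+h_3\pmod6$ are exactly $\{0,3\}$. A short check using the residues forced on $n$ and $\lambda$ by the table defining $\S$ shows $n\lambda\equiv3\pmod6$ in every row in which the restriction bites, whence $(n\lambda-3)/2\equiv0\pmod3$, matching the set of achievable sums. Once the residue is correct, distributing the total among three bins of capacity $(m-2)/2$ is a routine splitting argument given the upper bound from the hypothesis.

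The sub-case $n=1$ escapes this template, since three positive odd integers cannot sum to $1$; it will require a separate construction. A natural route is to fuse two of the three $1$-factors produced by Lemma \ref{evenKv}---drawn from different copies of $K_m$ and chosen so that their pair-sets interleave into a single $m$-cycle---into an additional Hamiltonian cycle, leaving exactly one $1$-factor to feed into the star construction.
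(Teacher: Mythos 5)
Your proposal follows essentially the same route as the paper's proof: decompose $3K_m(M)$ into $k_1=\frac{n\lambda-3}{2}$ Hamiltonian cycles, three $1$-factors and triangles via Lemma \ref{evenKv}, convert $\frac{n-3}{2}$ of the Hamiltonian cycles into pairs of $1$-factors to reach $n$ of them, and feed everything into star construction (II) together with a $K_3$-decomposition of $3K_n(N)$; your residue bookkeeping for splitting $k_1$ among the three copies of $K_m$ matches the paper's. Your flag on $n=1$ is a genuine catch --- the paper's own proof also uses ``$\frac{n-3}{2}$ Hamiltonian cycles'' there, yet invokes this lemma with a group of size $1$ in Lemma \ref{lemmaoddeven1} --- although your sketched repair (fusing two $1$-factors from different copies into a Hamiltonian cycle) still needs a justification that the two $1$-factors can be chosen edge-disjoint with union a single $m$-cycle rather than several shorter even cycles.
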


\begin{proof}

 It suffices to decompose $3K_m(M)$ into $k_1=\frac{n\lambda-3}{2}$ Hamiltonian cycles, three 1-factors and a collection of triangles. Since $n$ is odd, $3K_n(N)$ can be decomposed into a collection of triangles $\T$. Use $\frac{n-3}{2}$ Hamiltonian cycles to produce $n-3$ 1-factors, then there are a total of $n$ 1-factors, one of which for each element in $N$. The remaining $n(\frac{\lambda-1}{2})$ Hamiltonian cycles are for $n$ elements in $N$, and hence $\frac{\lambda-1}{2}$ for each element. Apply the star construction (II) between $3K_m(M)$ and $N$ accordingly, together with $\T$, provides our desired \GDD$(m,n;3,\lambda)$.

 When $m$ is even and $n$ is odd, $\lambda$ is always odd. Thus $\frac{n\lambda-3}{2}$ is an integer. Furthermore, for the case $m\equiv 2$ or $4\pmod6$, if $n\equiv 3\pmod 6$ then $3|k_1$. If $n\equiv 3\pmod 6$, we have $\lambda\equiv 3\pmod 6$; and hence $3|k_1$. Since $\lambda \leq \lfloor3(\frac{m-1}{n})\rfloor$, we have  $k_1=\frac{n\lambda-3}{2}\leq 3(\frac{m-2}{2})$ which means that there are enough Hamiltonian cycles available in  $3K_m(M)$.

If $m\equiv 0\pmod6$, the decomposition is given immediately by Lemma \ref{evenKv}. If $m\equiv 4\pmod6$, $\frac{m-2}{2}\equiv 1 \pmod 3$, by Lemma \ref{evenKv},  we need to split $k_1$ into $a_1,a_2,$ and $a_3$ for each $K_m(M)$ where $k_1=a_1+a_2+a_3$ and $a_i\equiv 1\pmod 3$, which can be done because of $3|k_1$.
Thus the decomposition is valid for all $k_1$.

If $m\equiv 2\pmod6$, then $\frac{m-2}{2}\equiv 0 \pmod 3$. Similarly to the previous case, the decomposition can be done.
\end{proof}

For fixed $(m,n)$ where $n$ is odd, if  $\lambda_{max}(m,n) \leq \lfloor3(\frac{m-1}{n})\rfloor$, Lemmas \ref{lemmaoddodd}-\ref{lemmaevenodd} assure that we can construct \GDD$(m,n;3,\lambda)$ for all $(m,n,\lambda)$ satisfying (NC1)-(NC3). Otherwise,  $\lambda_{max}(m,n)$ lies between $\lfloor3(\frac{m-1}{n})\rfloor$ and $3(\frac{m-1}{n})+3(\frac{n-1}{m})$, possibly along with other large $\lambda$'s, that could cause problem in our construction. However, Theorem \ref{nodd} below shows that only two values of $\lambda$, namely $\lambda_{max}(m,n)$ and $\lambda_{max}(m,n)-2$, that we need to scrutinize.
%%%%%%%%%%%%%%%%%%%%%%%%%%%%%%%%%%%%%%%%%%%%%%%%%%%%%%%%%%%%%%%%%%%%%%%%%%%%
\begin{Theorem}\label{nodd}
Let $\lambda\geq4$ and $m>n$ be positive integers such that $n$ is odd. If $\lambda<\lambda_{max}(m,n)$, then there exists a \GDD$(m,n;3,\lambda)$ except for possibly  $\lambda=\lambda_{max}(m,n)-2$ when $m\equiv 0,3\pmod 6$ or $n \equiv 3\pmod 6$, and $1\leq 3(\frac{n-1}{m})<2$.

\end{Theorem}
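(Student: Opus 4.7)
The plan is to extend the constructions of Lemmas~\ref{lemmaoddodd} and~\ref{lemmaevenodd} by combining the MMN-style star construction used there with a symmetric ``MNN-star'' variant in which the roles of $M$ and $N$ are swapped: $3K_n(N)$ supplies the Hamilton cycles and each cycle is starred through a vertex of $M$. Given $\lambda<\lambda_{\max}(m,n)$, I would split $\lambda=\alpha+\beta$ with $\alpha,\beta$ both of the parity forced by membership in $\S$, use the MMN-star with $\alpha/2$ cycles per vertex of $N$ to cover all MM pairs and place $\alpha$ occurrences on every MN pair, and use the MNN-star with $\beta/2$ cycles per vertex of $M$ to cover all NN pairs and place another $\beta$ occurrences on every MN pair. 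The totals are then $3$ on each MM or NN pair and $\alpha+\beta=\lambda$ on each MN pair, as required for a \GDD$(m,n;3,\lambda)$.

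For this to work, $3K_m(M)$ must admit a decomposition into $\tfrac{n\alpha}{2}$ Hamilton cycles (plus three $1$-factors when $m$ is even) and triangles, and $3K_n(N)$ must admit a decomposition into $\tfrac{m\beta}{2}$ Hamilton cycles and triangles. Both are delivered by applying Lemma~\ref{oddKv} to each of the three copies of the underlying complete graph (and Lemma~\ref{evenKv} on the $M$-side when $m$ is even), after splitting the Hamilton-cycle count into three summands of the residue required by those lemmas. The resulting feasibility bounds are $\alpha\le\lfloor 3(m-1)/n\rfloor$ and $\beta\le\lfloor 3(n-1)/m\rfloor$, and their sum matches the NC3 bound that defines $\lambda_{\max}(m,n)$, so a valid split $(\alpha,\beta)$ should exist for every admissible $\lambda$ strictly below $\lambda_{\max}(m,n)$, with the possible exception of the tightest value $\lambda=\lambda_{\max}(m,n)-2$. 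The routine work is to pick $\beta=0$ when $\lambda\le\lfloor 3(m-1)/n\rfloor$ (recovering the earlier lemmas) and to push $\beta$ upward by $2$ at a time through the admissible residues otherwise, verifying the Lemma~\ref{oddKv}/\ref{evenKv} congruences in each subcase.

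The hard part is the boundary value $\lambda=\lambda_{\max}(m,n)-2$. There the slack $3(n-1)/m$ available on the MNN side is at most $2$, so $\beta=2$ is essentially the only option, leaving $\alpha=\lambda-2$; the corresponding Hamilton-cycle count $\tfrac{m\beta}{2}=m$ must then split modulo~$3$ to satisfy Lemma~\ref{oddKv}, and when $n\equiv 3\pmod 6$ or $m\equiv 0,3\pmod 6$ this arithmetic constraint, coupled with the parity of $\lambda$ prescribed by $\S$, may have no solution once $3(n-1)/m<2$. I expect a clean case analysis, separated into the odd-$m$ and even-$m$ branches paralleling Lemmas~\ref{lemmaoddodd} and~\ref{lemmaevenodd}, to settle everything outside these residue classes and to isolate exactly the excluded sub-case stated in the theorem.
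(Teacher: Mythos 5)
Your proposal follows essentially the same route as the paper: the paper likewise handles $\lambda\le\lfloor 3(\frac{m-1}{n})\rfloor$ via the $M$-side star construction (Lemmas \ref{lemmaoddodd} and \ref{lemmaevenodd}), and for the remaining window of admissible $\lambda$ (of width less than $4$) it adds a second star construction decomposing $3K_n(N)$ into $k_2=m$ Hamiltonian cycles --- your $\beta=2$ --- which requires $3(\frac{n-1}{m})\ge 2$ and thereby isolates exactly the stated exception. The only remark is that since $m>n$ forces $\lfloor 3(\frac{n-1}{m})\rfloor\le 2$, your general split $\lambda=\alpha+\beta$ collapses to $\beta\in\{0,2\}$, which is precisely the paper's argument.
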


\begin{proof}

If $\lambda \leq \lfloor3(\frac{m-1}{n})\rfloor$, by Lemmas \ref{lemmaoddodd}-\ref{lemmaevenodd}, there exists a \GDD$(m,n;3,\lambda)$. Let \[\Gamma(m,n)=\{\lambda :  (m,n,\lambda)\in \S \mbox{ and }  \lfloor3(\frac{m-1}{n})\rfloor < \lambda\leq 3(\frac{m-1}{n})+3(\frac{n-1}{m})\}.\]
Note that $\Gamma(m,n)$ is a subset of the set that contains all integers consecutively lying between $\lfloor3(\frac{m-1}{n})\rfloor$ and $3(\frac{m-1}{n})+3(\frac{n-1}{m})$. If $|\Gamma(m,n)|\leq 1$, then only $\lambda_{max}(m,n)$ could possibly be in $\Gamma(m,n)$. If $|\Gamma(m,n)|\leq 2$, then only $\lambda_{max}(m,n)$ and $\lambda_{max}(m,n)-2$ could possibly be in $\Gamma(m,n)$.

Since $m>n$, $3(\frac{n-1}{m})<3$. Thus $\Delta=(3(\frac{m-1}{n})+3(\frac{n-1}{m}))-\lfloor3(\frac{m-1}{n})\rfloor = (3(\frac{m-1}{n})-\lfloor3(\frac{m-1}{n})\rfloor)+3(\frac{n-1}{m})<1+3=4$.

{\em Case $1$ $n \not\equiv 3\pmod 6$ and $m \not\equiv 0,3\pmod 6$.} If $n \not\equiv 3\pmod 6$ and $m \not\equiv 3\pmod 6$ then $\lambda\equiv 0\pmod 6$. If $n \not\equiv 3\pmod 6$ and $m \not\equiv 0\pmod 6$ then $\lambda\equiv 3\pmod 6$. Hence, in this case we have  $|\Gamma(m,n)|\leq 1$.\\

{\em Case $2$ $n \equiv 3\pmod 6$ or $m \equiv 0,3\pmod 6$.} Since all $\lambda$'s for a fixed $(m,n)$ are either even or odd and $\Delta<4$, $|\Gamma(m,n)|\leq 2$. However, if $3(\frac{n-1}{m})<1$ then $\Delta<2$, and hence $|\Gamma(m,n)|\leq 1$. Next consider the case when $3(\frac{n-1}{m})\geq 2$. We introduce a new  construction which uses not only the star construction between $3K_m(M)$ and $N$, but also use the star construction between $3K_n(N)$ and $M$. For the latter star construction, we decompose $3K_n(N)$ into $k_2=m$ Hamiltonian cycles and a collection of triangles. This decomposition of $3K_n(N)$ contributes 2 more to the second index of the design. It yields that one element $\lambda_0 \in \Gamma(m,n)$ admits the existence of a \GDD$(m,n;3,\lambda_0)$. Therefore, in this case, there is at most one element in $\Gamma(m,n)$ with an unknown construction. Below is the argument showing that the desired decomposition is possible.

{\em Case $2.1$ $n \equiv 3\pmod 6$ or $m \equiv 3\pmod 6$.} It can be done by the same way as the decomposition in Lemma \ref{lemmaoddodd} using $\lambda=2$. This works because of  $3(\frac{n-1}{m})\geq 2$, so the assumption in Lemma \ref{lemmaoddodd} is satisfied.

{\em Case $2.2$ $n \equiv 3\pmod 6$ or $m \equiv 0\pmod 6$.} It can be done by the similar way as the decomposition in Lemma \ref{lemmaoddodd} using $\lambda=2$. If $n \equiv 3\pmod 6$, by Lemma \ref{oddKv} the desired decomposition is obviously possible. If $n \equiv 1 \mbox{ or }5\pmod 6$, then $m \equiv 0\pmod 6$. Thus, $3|k_2=m$. Hence, we can split $k_2$ into suitable $a_1$, $a_2$ and $a_3$ for each $K_n(N)$ where $k_2=a_1+a_2+a_3$ and $a_i\equiv 0$ or $2\pmod 3$ depending on the value of $n$ as required.
\end{proof}

%%%%%%%%%%%%%%%%%%%%%%%%%%%%%%%%%%%%%%%%%%%%%%%%%%%%%%%%%%%%%%%%%%%%%

For the case that $n$ is even (so, $m$ is odd) when $m>n$, the same construction as previous case no longer works because there is no decomposition of $3K_n$ into triangles if $n$ is even as in Lemma \ref{3Kn}. A modified construction will be presented here. The construction is separated into two cases depending on $m\equiv 1 \mbox{ or } 5\pmod 6$ and $m\equiv 3\pmod 6$ showed in Theorems \ref{oddeven1} and \ref{oddeven2}, respectively.

\begin{Lemma}\label{lemmaoddeven1}
Let $\lambda\geq4$ and $m>n$ be positive integers such that $m\equiv 1 \mbox{ or } 5\pmod 6$ and $n$ is even. Let $(m,n,\lambda)\in \S$. If $\lambda \leq \lfloor3(\frac{m-3}{n})\rfloor$ and $\lambda\leq 3(n-1)$, then there exists a \GDD$(m,n;3,\lambda)$.

\end{Lemma}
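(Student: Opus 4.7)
The plan is to mirror the construction of Lemma \ref{lemmaevenodd}, adapted to the case where $m$ is odd and $n$ is even. The congruence table from (NC2) forces $\lambda$ to be odd. Because $m$ is odd, $3K_m(M)$ has no $1$-factor; thus the $1$-factors required to produce the odd bipartite index must all come from $3K_n(N)$, which is possible because $n$ is even.

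First I would decompose $3K_n(N)$ into $m$ $1$-factors together with a collection of triangles, by applying Lemma \ref{evenKv} to each of the three copies of $K_n(N)$ and then converting (when necessary) Hamiltonian cycles into pairs of $1$-factors---a step made possible by the evenness of $n$. The hypothesis $\lambda\le 3(n-1)$ is the edge-capacity bound guaranteeing that these $m$ $1$-factors fit inside $3K_n(N)$ alongside the triangles. Next I decompose $3K_m(M)$ into $k_1=n(\lambda-1)/2$ Hamiltonian cycles and a collection of triangles via Lemma \ref{oddKv} applied to each of the three copies of $K_m(M)$; the hypothesis $\lambda\le\lfloor 3(m-3)/n\rfloor$ ensures $k_1$ admits a split $k_1=k^{(1)}+k^{(2)}+k^{(3)}$ with each $k^{(i)}$ in the admissible residue class ($0$ or $2$ modulo $3$, according as $m\equiv 1$ or $5\pmod 6$) and bounded by $(m-1)/2$.

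Finally I combine the two decompositions using the star constructions. Apply star construction (I) between $3K_m(M)$ and $N$, assigning $(\lambda-1)/2$ Hamiltonian cycles of $3K_m(M)$ to each vertex of $N$; and apply the $1$-factor-only degenerate version of star construction (II) between $3K_n(N)$ and $M$, assigning exactly one $1$-factor of $3K_n(N)$ to each vertex of $M$. A pair $\{v,a\}$ with $v\in M$ and $a\in N$ then appears in $2\cdot(\lambda-1)/2+1=\lambda$ blocks, yielding the desired \GDD$(m,n;3,\lambda)$.

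The main obstacle is the case analysis needed to verify both decompositions: checking, in each of the admissible $(m,n)$ residue classes modulo $6$, that $k_1$ can be split according to Lemma \ref{oddKv} and that exactly $m$ $1$-factors can be produced in $3K_n(N)$ via Lemma \ref{evenKv} together with Hamiltonian-cycle-to-$1$-factor conversions. The hypotheses $\lambda\le\lfloor 3(m-3)/n\rfloor$ and $\lambda\le 3(n-1)$ are precisely what is needed for each subcase to go through.
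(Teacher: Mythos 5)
Your construction breaks down at the step where you ask $3K_n(N)$ to supply one $1$-factor for each vertex of $M$: that requires $m$ pairwise edge-disjoint $1$-factors of $3K_n(N)$, and since every vertex of $3K_n$ has degree $3(n-1)$, at most $3(n-1)$ such $1$-factors can exist. The hypothesis $\lambda\leq 3(n-1)$ bounds $\lambda$, not $m$, and under the lemma's assumptions $m$ can far exceed $3(n-1)$: for instance $(m,n,\lambda)=(25,6,5)$ lies in $\S$ and satisfies both hypotheses, yet you would need $25$ edge-disjoint $1$-factors in $3K_6$, which has only $45$ edges while $25$ $1$-factors would require $75$. So the extra unit of bipartite multiplicity cannot be drawn from the small group's side for all of $M$ at once. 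There is also a secondary arithmetic obstruction in your decomposition of $3K_m(M)$: when $n\equiv 2,4\pmod 6$ the table for $\S$ forces $\lambda\equiv 3\pmod 6$, and then $k_1=n(\lambda-1)/2$ is not divisible by $3$; but Lemma \ref{oddKv} forces each of the three copies of $K_m(M)$ (with $m\equiv 1$ or $5\pmod 6$) to contribute a number of Hamiltonian cycles congruent to $0$, respectively $2$, modulo $3$, so the three contributions always sum to a multiple of $3$ and the split $k_1=k^{(1)}+k^{(2)}+k^{(3)}$ you need does not exist.

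The paper avoids both problems by deleting a single vertex $a$ from $M$ rather than sourcing $1$-factors from $N$: it covers all pairs inside $N\cup\{a\}$ by a \GDD$(n,1;3,\lambda)$ obtained from Lemma \ref{lemmaevenodd} with groups $N$ and $\{a\}$ (this is where $\lambda\leq 3(n-1)$ is actually used), and then decomposes $3K_{m-1}(M\setminus\{a\})$ --- now of even order --- into $k_1=n\lambda/2$ Hamiltonian cycles, three $1$-factors and triangles via Lemma \ref{evenKv}. The three $1$-factors serve $a$, while $\frac{\lambda-1}{2}$ Hamiltonian cycles go to each vertex of $N$ and the remaining $n/2$ Hamiltonian cycles are split into the $n$ further $1$-factors needed for the vertices of $N$. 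Your overall template (star constructions plus a residue-class case analysis) is the right one, but the $1$-factors must come from the large group with one vertex removed, not from the small group.
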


\begin{proof}

 Fix a vertex $a$ in $M$. Since $\lambda\leq 3(n-1)$, there exists a \GDD$(n,1;3,\lambda)$ on $N$ and $\{a\}$ by Lemma \ref{lemmaevenodd}. We will employ Lemma \ref{evenKv} to decompose $3K_{m-1}(M\setminus\{a\})$ into $k_1=\frac{n\lambda}{2}$ Hamiltonian cycles, three 1-factors and a collection of triangles. Then we will use a modified star construction as follows: The three 1-factors are for the vertex $a$. Among $k_1$ Hamiltonian cycles, we use $\frac{\lambda-1}{2}$ Hamiltonian cycles for each vertex in $N$, and the remaining $\frac{n}{2}$ Hamiltonian cycles to produce $n$ 1-factors, one of which for each vertex in $N$. Then these all will yield a \GDD$(m,n;3,\lambda)$ on $M$ and $N$.

 Now it suffices to show that the desired decomposition of $3K_{m-1}(M\setminus\{a\})$ is possible. Since $n$ is even, $\frac{n\lambda}{2}$ is an integer. Since $\lambda \leq \lfloor3(\frac{m-3}{n})\rfloor < 3(\frac{m-3}{n})$, we have  $k_1=\frac{n\lambda}{2}\leq 3(\frac{m-3}{2})$ which means that there are enough Hamiltonian cycles available in  $3K_{m-1}(M\setminus\{a\})$. If $m\equiv 1\pmod6$, then $m-1\equiv 0\pmod6$; so, the decomposition is obviously possible by Lemma \ref{evenKv}. When $m\equiv 5\pmod 6$, if $n\equiv 0\pmod6$ then $\lambda$ is odd; if $n\equiv 2$ or $4\pmod6$ then $\lambda\equiv3\pmod6$. It follows that we always have $3|k_1$. Now assume that the $i^{th}$ copy of $K_{m-1}(M\setminus\{a\})$ provide $a_i$ Hamiltonian cycles, so $k_1=a_1+a_2+a_3$.  By Lemma \ref{evenKv}, $a_i\equiv 1\pmod3$. Furthermore, the maximum number of Hamiltonian cycles in $K_{m-1}(M\setminus\{a\})$ is $\frac{m-3}{2}$ which is also congruence to $1\pmod3$. Thus it is possible to find a proper $a_i$ for $i=1,2,3$, and therefore, the decomposition can be done.
 \end{proof}

\begin{Lemma}\label{lemma2oddeven1}
Let $\lambda\geq4$ and $m>n$ be positive  integers such that $m\equiv 1 \mbox{ or } 5\pmod 6$ and $n$ is even. Let $(m,n,\lambda)\in \S$ be such that $\lambda< \lambda_{max}(m,n)$. If $\lambda\leq 3(n-1)$, then there exists a \GDD$(m,n;3,\lambda)$ except for possibly  $\lambda=\lambda_{max}(m,n)-2$ when $n \equiv 0\pmod 6$.

\end{Lemma}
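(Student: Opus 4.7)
The plan is to mirror the proof of Theorem \ref{nodd}: first handle every valid $\lambda$ below the threshold $\lfloor 3(m-3)/n\rfloor$ via Lemma \ref{lemmaoddeven1}, then argue that the set of admissible $\lambda$ lying strictly above this threshold is so small that the hypothesis $\lambda<\lambda_{max}(m,n)$ already eliminates every case except the one flagged as exceptional.

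Concretely, if $\lambda\leq \lfloor 3(m-3)/n\rfloor$, then the remaining hypotheses of Lemma \ref{lemmaoddeven1} ($\lambda\leq 3(n-1)$ and $\lambda\geq 4$) are available, so Lemma \ref{lemmaoddeven1} immediately produces the desired \GDD$(m,n;3,\lambda)$. Otherwise, following the $\Gamma$-set recipe in Theorem \ref{nodd}, I define
\[
\Gamma(m,n) := \{\lambda' : (m,n,\lambda')\in\S,\ \lfloor 3(m-3)/n\rfloor < \lambda' \leq 3(m-1)/n+3(n-1)/m\},
\]
and writing $3(m-3)=n\lfloor 3(m-3)/n\rfloor + r$ with $0\leq r\leq n-1$, I compute
\[
\Delta := 3(m-1)/n+3(n-1)/m - \lfloor 3(m-3)/n\rfloor = \frac{6+r}{n} + \frac{3(n-1)}{m}.
\]
Since $m$ is odd, $3(m-3)$ is a multiple of $6$, so $r$ is a multiple of $\gcd(6,n)$; this is the key arithmetic observation that drives the case split.

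If $n\equiv 2,4\pmod 6$, then $\gcd(6,n)=2$, so $r\leq n-2$ and $\Delta\leq 1+4/n+3(n-1)/m<6$ using $m>n$. The table forces $\lambda\equiv 3\pmod 6$ (consecutive admissible values differ by $6$), so the interval of length $\Delta$ contains at most one admissible $\lambda$. Hence $\Gamma(m,n)\subseteq\{\lambda_{max}(m,n)\}$, which is excluded by the hypothesis $\lambda<\lambda_{max}(m,n)$. If $n\equiv 0\pmod 6$, then $\gcd(6,n)=6$, so $r\leq n-6$ and $\Delta\leq 1+3(n-1)/m<4$. Admissible $\lambda$ are now odd, differing by $2$, so the interval contains at most two such integers, giving $\Gamma(m,n)\subseteq\{\lambda_{max}(m,n),\lambda_{max}(m,n)-2\}$. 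The hypothesis $\lambda<\lambda_{max}(m,n)$ then leaves $\lambda=\lambda_{max}(m,n)-2$ as the only potentially unhandled value, which is precisely the permitted exception.

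The main obstacle is the divisibility bookkeeping that pins $r$ to a multiple of $6$ (not merely of $2$) in the $n\equiv 0\pmod 6$ subcase; this is what keeps $\Delta$ strictly below $4$ and prevents a third odd integer from entering $\Gamma(m,n)$, which would compel an additional exception not allowed by the statement. Once this and the parallel $\gcd(6,n)=2$ bound are in hand, the conclusion is immediate.
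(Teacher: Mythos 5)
Your proof is correct and follows the same outline as the paper's: dispose of $\lambda\le\lfloor 3(\frac{m-3}{n})\rfloor$ via Lemma \ref{lemmaoddeven1}, then bound the length $\Delta$ of the residual interval and count how many admissible $\lambda$ it can contain. The one genuine difference is that you track the remainder $r=3(m-3)\bmod n$ explicitly, whereas the paper asserts $\Delta\le \frac{6}{n}+\frac{3(n-1)}{m}$, an inequality that silently assumes $n\mid 3(m-3)$ (it discards the fractional part of $3(\frac{m-3}{n})$). Without that assumption one only gets $\Delta<\frac{6}{n}+1+\frac{3(n-1)}{m}<\frac{6}{n}+4$, and for $n\equiv 0\pmod 6$ this permits $\Delta$ up to $5$, hence possibly three odd values in $\Gamma(m,n)$ and an additional exception $\lambda_{max}(m,n)-4$ that the statement does not allow. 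Your observation that $6\mid 3(m-3)$ (since $m$ is odd) forces $r$ to be a multiple of $\gcd(6,n)$, so that $r\le n-6$ when $6\mid n$ and therefore $\Delta\le 1+\frac{3(n-1)}{m}<4$, is exactly what is needed to close that gap; the parallel bound $r\le n-2$ handles $n\equiv 2,4\pmod 6$, where the spacing $6$ between admissible $\lambda$'s makes $\Delta<6$ sufficient. In short, your argument is a strictly more careful version of the paper's and arrives at the same exception set; the remainder analysis you supply is not optional decoration but the step that actually justifies the claimed bound $\Delta<4$ in the $n\equiv 0\pmod 6$ case.
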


\begin{proof}

 The proof is carried out similarly to Theorem \ref{nodd}. By Lemma \ref{lemmaoddeven1}, it remains to consider the case $\lfloor3(\frac{m-3}{n})\rfloor < \lambda\leq 3(\frac{m-1}{n})+3(\frac{n-1}{m})$.  Note that $\Delta=(3(\frac{m-1}{n})+3(\frac{n-1}{m}))-\lfloor3(\frac{m-3}{n})\rfloor \leq 3(\frac{2}{n})+3(\frac{n-1}{m})<\frac{6}{n}+3$.  Since $4\leq\lambda\leq 3(n-1)$, $n\neq 2$; so $n\geq 4$. Then $\Delta<\frac{6}{n}+3 <6$. If $n\not\equiv 0\pmod6$, then we have $\lambda\equiv 3\pmod 6$. Hence this implies that only $\lambda=\lambda_{max}(m,n)$ that may or may not fit our construction scheme. On the other hand, $n\equiv 0\pmod6$; so $n\geq 6$ and $\lambda$ is odd. It follows that $\Delta<4$. Thus it makes  $\lambda_{max}(m,n)$ and $\lambda_{max}(m,n)-2$ the only possible unsolve cases.
\end{proof}

\begin{Theorem}\label{oddeven1}
Let $\lambda\geq4$ and $m>n$ be positive integers such that $m\equiv 1 \mbox{ or } 5\pmod 6$ and $n$ is even. Let $(m,n,\lambda)\in \S$ be such that $\lambda<\lambda_{max}(m,n)$.
\begin{itemize}[noitemsep,topsep=0pt]
\item[$(i)$] If $n\geq \sqrt{m}+1$, then there exists a \GDD$(m,n;3,\lambda)$ except for possibly  $\lambda=\lambda_{max}(m,n)-2$ when $n \equiv 0\pmod 6$.
\item[$(ii)$] If $n\leq \sqrt{m}$ and $\lambda\leq3(n-1)$, then there exists a \GDD$(m,n;3,\lambda)$.
\end{itemize}
\end{Theorem}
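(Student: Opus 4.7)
The plan is to reduce each case to one of the preceding lemmas by a suitable bound on $\lambda$ in terms of $n$; no new combinatorial construction is needed beyond what Lemmas~\ref{lemmaoddeven1} and~\ref{lemma2oddeven1} already provide.

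For part $(i)$, the aim is to verify the hypothesis $\lambda\leq 3(n-1)$ of Lemma~\ref{lemma2oddeven1}. From (NC3) one has $\lambda_{max}(m,n)\leq 3(\frac{m-1}{n})+3(\frac{n-1}{m})$. The assumption $n\geq \sqrt{m}+1$ gives $\frac{m-1}{n}\leq\frac{m-1}{\sqrt{m}+1}=\sqrt{m}-1$, while $m>n$ gives $\frac{n-1}{m}<1$, so $\lambda_{max}(m,n)<3(\sqrt{m}-1)+3=3\sqrt{m}$. On the other hand, $n-1\geq\sqrt{m}$ yields $3(n-1)\geq 3\sqrt{m}$. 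Combining, $\lambda<\lambda_{max}(m,n)<3(n-1)$, and since $\lambda$ is an integer, $\lambda\leq 3(n-1)$. Lemma~\ref{lemma2oddeven1} then delivers a \GDD$(m,n;3,\lambda)$, and the stated exception $\lambda=\lambda_{max}(m,n)-2$ when $n\equiv 0\pmod 6$ carries over verbatim from that lemma.

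For part $(ii)$, the aim is to verify the hypothesis $\lambda\leq\lfloor 3(\frac{m-3}{n})\rfloor$ of Lemma~\ref{lemmaoddeven1}. Since $\lambda\geq 4$ together with $\lambda\leq 3(n-1)$ forces $n\geq 3$, and $n$ is even, we have $n\geq 4$. The assumption $n\leq\sqrt{m}$ gives $m\geq n^2$, hence $3(\frac{m-3}{n})\geq 3n-\frac{9}{n}\geq 3(n-1)$, where the last inequality reduces to $\frac{9}{n}\leq 3$ and holds since $n\geq 3$. Therefore $\lambda\leq 3(n-1)\leq 3(\frac{m-3}{n})$, and because $\lambda$ is an integer, $\lambda\leq\lfloor 3(\frac{m-3}{n})\rfloor$. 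Lemma~\ref{lemmaoddeven1} then produces the desired \GDD$(m,n;3,\lambda)$.

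The only real work is the numerical calibration: in part $(i)$ one must secure the strict inequality $\lambda_{max}(m,n)<3(n-1)$ (which is where the hypothesis $n\geq\sqrt{m}+1$, rather than merely $n\geq\sqrt{m}$, is used), and in part $(ii)$ one must rule out $n=2$ before estimating $3(\frac{m-3}{n})$ from below. Both are routine, and together they reduce the theorem to the two preceding lemmas.
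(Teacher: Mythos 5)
Your proposal is correct. Part $(i)$ is essentially the paper's own argument: both verify $\lambda\leq 3(n-1)$ from $n\geq\sqrt{m}+1$ (the paper bounds $\frac{m-1}{n}\leq\frac{(n-1)^2-1}{n}=n-2$, you factor $\frac{m-1}{\sqrt{m}+1}=\sqrt{m}-1$; same inequality, same appeal to Lemma \ref{lemma2oddeven1}, same inherited exception). Part $(ii)$ takes a genuinely different route. The paper applies Lemma \ref{lemma2oddeven1} and then disposes of that lemma's residual exception $\lambda=\lambda_{max}(m,n)-2$ by showing that under $n\leq\sqrt{m}$ one has $3(\frac{n-1}{m})<1$ and hence $\Delta<\frac{6}{n}+1\leq 2$, so the exceptional window $\Gamma(m,n)$ contains at most the single value $\lambda_{max}(m,n)$. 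You instead bypass Lemma \ref{lemma2oddeven1} entirely and check the hypothesis $\lambda\leq\lfloor 3(\frac{m-3}{n})\rfloor$ of Lemma \ref{lemmaoddeven1} directly, via $m\geq n^2$ giving $3(\frac{m-3}{n})\geq 3n-\frac{9}{n}\geq 3(n-1)\geq\lambda$ for even $n\geq 4$ (and your exclusion of $n=2$ from $4\leq\lambda\leq 3(n-1)$ is the right preliminary step). Your version is slightly cleaner and in fact proves marginally more, since it covers every $\lambda\leq 3(n-1)$ with $(m,n,\lambda)\in\S$ without needing $\lambda<\lambda_{max}(m,n)$ or the case split on $n\bmod 6$; the paper's version has the virtue of reusing the $\Delta$-counting template it set up in Theorem \ref{nodd}. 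Both are sound.
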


\begin{proof}
$(i)$ If $n\geq \sqrt{m}+1$, then $\lambda\leq 3(\frac{m-1}{n})+3(\frac{n-1}{m})\leq 3(\frac{(n-1)^2-1}{n})+3(\frac{n-1}{m})=3(n-2+\frac{n-1}{m})\leq 3(n-1)$. Therefore the statement holds by Lemma \ref{lemma2oddeven1}.

$(ii)$ By Lemma \ref{lemma2oddeven1}, we need to consider only when $n\equiv 0\pmod6$; so, $n\geq6$. We have $m\geq 9$ since $n\leq \sqrt{m}$, thus $\sqrt{m}\leq\frac{m}{3}$. Hence $n\leq \sqrt{m}\leq\frac{m}{3}$, which yields $3(\frac{n-1}{m})<1$.  So, the upper bound of $\Delta$ in Lemma \ref{lemma2oddeven1} becomes $\Delta=(3(\frac{m-1}{n})+3(\frac{n-1}{m}))-\lfloor3(\frac{m-3}{n})\rfloor \leq 3(\frac{2}{n})+3(\frac{n-1}{n})<\frac{6}{n}+1\leq2$. This concludes that the construction works for all $\lambda\neq\lambda_{max}(m,n)$.
\end{proof}

Remark that our construction in Theorem \ref{oddeven1} does not include the case when $n\leq \sqrt{m}$ and $\lambda>3(n-1)$. It is left as an open problem.

%%%%%%%%%%%%%%%%%%%%%%%%%%%%%%%%%%%%%%%%%%%%%%%%%%%%%%%%%%%%%%%%%%%%%

For the last case $m\equiv 3\pmod 6$ and even $n$, the proof is carried out similarly to the case odd $m\equiv 1,5\pmod 6$ and even $n$. However, we need to pull  three vertices out of $M$ instead of one vertex, which needs a stronger assumption. Consequently, our construction in this case cannot account for $\lambda=\lambda_{max}(m,n)$ and $\lambda=\lambda_{max}(m,n)-2$.

\begin{Lemma}\label{lemmaoddeven2}
Let $\lambda\geq4$ and $m>n$ be positive integers such that $m\equiv 3\pmod 6$ and $n$ is even. Let $(m,n,\lambda)\in \S$. If $\lambda \leq \lfloor3(\frac{m-7}{n})\rfloor$ and $\lambda\leq n-1$, then there exists a \GDD$(m,n;3,\lambda)$.

\end{Lemma}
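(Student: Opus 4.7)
The plan follows the template of Lemma \ref{lemmaoddeven1}, adjusted to reserve three vertices in $M$ rather than one. The motivation is that $m \equiv 3 \pmod 6$ makes $m-3 \equiv 0 \pmod 6$, which is the parity under which Lemma \ref{evenKv} imposes no modular restriction on the per-copy Hamiltonian cycle count, allowing a flexible decomposition of $3K_{m-3}$. Fix $A = \{a_1, a_2, a_3\} \subseteq M$ and set $M' = M \setminus A$. I would first produce a \GDD$(n, 3; 3, \lambda)$ on the groups $N$ and $A$ by applying Lemma \ref{lemmaevenodd} with its parameters $(m,n)$ set to $(n, 3)$. This invocation is legitimate: since $m \equiv 3 \pmod 6$ and $n$ is even, the $\S$-table forces $\lambda$ to be odd, from which $(n, 3, \lambda) \in \S$ follows routinely, and the hypothesis $\lambda \leq n-1 = \lfloor 3(n-1)/3 \rfloor$ is exactly the numerical bound required by Lemma \ref{lemmaevenodd}.

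Next, invoke Lemma \ref{evenKv} on each of the three copies of $K_{m-3}(M')$ to decompose $3K_{m-3}(M')$ into $k_1 = 3 + \frac{n\lambda}{2}$ Hamiltonian cycles, three $1$-factors, and a collection of triangles. Since $m-3 \equiv 0 \pmod 6$, any distribution $k_1 = h_1 + h_2 + h_3$ with $1 \leq h_i \leq (m-5)/2$ is admissible, and feasibility reduces to $k_1 \leq 3(m-5)/2$, which unpacks to precisely the bound $\lambda \leq \lfloor 3(m-7)/n \rfloor$.

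A modified star construction then completes the design. Assign three $1$-factors of $M'$ to each $a_i$, so that the triples $\bigcup_j (a_i * F_{i,j})$ provide the three required edges between $a_i$ and every $v \in M'$; the nine $1$-factors needed are sourced from the three direct $1$-factors of the Lemma \ref{evenKv} decomposition plus six obtained by splitting three of the Hamiltonian cycles (valid since $|M'|$ is even). Assign $(\lambda-1)/2$ Hamiltonian cycles and one $1$-factor to each $y_j \in N$, contributing the $\lambda$ required edges between $y_j$ and every $v \in M'$; the $n$ $1$-factors needed here are produced by splitting a further $n/2$ Hamiltonian cycles. The Hamiltonian cycle tally $3 + \frac{n}{2} + \frac{n(\lambda-1)}{2} = 3 + \frac{n\lambda}{2} = k_1$ balances exactly. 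Combining the resulting triples with the triangles from the $3K_{m-3}$ decomposition and with the blocks of the sub-\GDD on $N \cup A$ yields the desired \GDD$(m, n; 3, \lambda)$.

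The main obstacle is the accounting: the two numerical hypotheses must correspond exactly to two feasibility bottlenecks — $\lambda \leq n-1$ for the sub-\GDD on $N \cup A$, and $\lambda \leq \lfloor 3(m-7)/n \rfloor$ for the Hamiltonian cycle budget in $3K_{m-3}$. The shift from the bound $3(m-3)/n$ appearing in Lemma \ref{lemmaoddeven1} to $3(m-7)/n$ here reflects the combined cost of the two-vertex reduction of the host graph and the three additional Hamiltonian cycles needed to furnish the six extra $1$-factors serving $A$ rather than a single vertex. Consequently, unlike Lemma \ref{lemmaoddeven1}, this construction cannot reach $\lambda = \lambda_{\max}(m,n)$ or $\lambda_{\max}(m,n) - 2$, as already anticipated in the prose preceding the lemma.
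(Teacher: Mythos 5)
Your proposal is correct and follows essentially the same route as the paper: fix three vertices of $M$, build the sub-\GDD$(n,3;3,\lambda)$ via Lemma \ref{lemmaevenodd} using $\lambda\leq n-1$, decompose $3K_{m-3}$ into $\frac{n\lambda}{2}+3$ Hamiltonian cycles, three $1$-factors and triangles via Lemma \ref{evenKv}, and finish with the modified star construction. In fact your write-up supplies the accounting (the nine $1$-factors for $A$, the $\frac{n}{2}$ split cycles for $N$, and the derivation of the bound $\lambda\leq\lfloor 3(\frac{m-7}{n})\rfloor$ from $k_1\leq 3(\frac{m-5}{2})$) that the paper compresses into ``similar to the proof of Lemma \ref{lemmaoddeven1}.''
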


\begin{proof}

 Fix three vertices $a,b$ and $c$ in $M$. Since $\lambda\leq n-1$, there exists a \GDD$(n,3;3,\lambda)$ on $N$ and $\{a,b,c\}$ by Lemma \ref{lemmaevenodd}.  Now $m-3\equiv 0\pmod 6$. We will employ Lemma \ref{evenKv} to decompose $3K_{m-3}(M\setminus\{a,b,c\})$ into $k_1=\frac{n\lambda}{2}+3$ Hamiltonian cycles, three 1-factors and a collection of triangles. Similar to the proof of Lemma \ref{lemmaoddeven1}, the bound $\lambda \leq \lfloor3(\frac{m-7}{n})\rfloor$ provides a proper construction.
 \end{proof}

\begin{Lemma}\label{lemma2oddeven2}
Let $\lambda\geq4$ and $m>n$ be positive integers such that $m\equiv 3\pmod 6$ and $n$ is even. Let $(m,n,\lambda)\in \S$ be such that $\lambda< \lambda_{max}(m,n)-2$. If $\lambda\leq n-1$, then there exists a \GDD$(m,n;3,\lambda)$ except for possibly  $\lambda=\lambda_{max}(m,n)-4$ when $6\leq n\leq 16$.

\end{Lemma}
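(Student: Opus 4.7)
The plan is to mirror Lemma \ref{lemma2oddeven1}, with Lemma \ref{lemmaoddeven2} (which removes three vertices from $M$) in place of Lemma \ref{lemmaoddeven1} (which removes one). First, apply Lemma \ref{lemmaoddeven2} to handle every admissible $\lambda$ with $\lambda\le\lfloor 3(\frac{m-7}{n})\rfloor$; the hypothesis $\lambda\le n-1$ is exactly what permits this invocation. The ``tail'' of values still to be resolved lies in the interval
\[
\bigl(\lfloor 3(\tfrac{m-7}{n})\rfloor,\; 3(\tfrac{m-1}{n})+3(\tfrac{n-1}{m})\bigr],
\]
whose length is
\[
\Delta \;<\; \tfrac{18}{n}+3(\tfrac{n-1}{m})+1.
\]

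A parity reduction then shrinks the tail further. For $m\equiv 3\pmod 6$ and $n$ even, the $\S$-table forces every admissible $\lambda$ to be odd, so the hypothesis $\lambda<\lambda_{max}(m,n)-2$ is actually $\lambda\le\lambda_{max}(m,n)-4$, eliminating the top two candidates $\lambda_{max}(m,n)$ and $\lambda_{max}(m,n)-2$ from consideration. A short preliminary computation shows that the regime $m<\tfrac{3(n-1)}{2}$ is vacuous: there $3(\tfrac{m-1}{n})+3(\tfrac{n-1}{m})<\tfrac{13}{2}$, so the odd integer $\lambda_{max}(m,n)$ is at most $5$, and no $\lambda$ with $\lambda\ge 4$ satisfies $\lambda<\lambda_{max}(m,n)-2$. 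Hence we may assume $m\ge\tfrac{3(n-1)}{2}$, i.e., $3(\tfrac{n-1}{m})\le 2$. For $n\ge 18$ this yields $\Delta<\tfrac{18}{n}+3\le 4$, and an interval of length less than $4$ contains at most two odd integers---necessarily $\lambda_{max}(m,n)$ and $\lambda_{max}(m,n)-2$, already excluded. Any remaining admissible $\lambda<\lambda_{max}(m,n)-2$ is thus at most $\lfloor 3(\tfrac{m-7}{n})\rfloor$, hence covered by the first step.

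The main obstacle is the vacuousness check for $m<\tfrac{3(n-1)}{2}$: one must combine NC3 with the parity requirement to verify that no $\lambda$ satisfies the hypotheses in that range, so that the clean bound $3(\tfrac{n-1}{m})\le 2$ can be used freely in the subsequent $\Delta$-estimate. Once this is in hand, $\Delta<4$ for $n\ge 18$ is immediate. The exceptional range $6\le n\le 16$ is precisely where the $\tfrac{18}{n}$ slack prevents $\Delta<4$ from holding even under $m\ge\tfrac{3(n-1)}{2}$, so $\lambda_{max}(m,n)-4$ may remain in the tail---producing the single advertised exception. All supporting decompositions---$3K_{m-3}(M\setminus\{a,b,c\})$ via Lemma \ref{evenKv} and the auxiliary \GDD$(n,3;3,\lambda)$ on $N\cup\{a,b,c\}$ via Lemma \ref{lemmaevenodd}---are inherited from Lemma \ref{lemmaoddeven2} without change.
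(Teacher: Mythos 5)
Your proof is correct and follows essentially the same route as the paper: dispatch $\lambda\le\lfloor 3(\frac{m-7}{n})\rfloor$ via Lemma \ref{lemmaoddeven2}, then bound the length $\Delta$ of the remaining interval of admissible $\lambda$ and count odd integers in it, using $\lambda<\lambda_{max}(m,n)-2$ to discard the top two and obtaining $\lambda_{max}(m,n)-4$ as the lone possible exception for $6\le n\le 16$. Your preliminary reduction to $m\ge\frac{3(n-1)}{2}$ (giving $3(\frac{n-1}{m})\le 2$) is a refinement the paper does not make --- it simply uses $3(\frac{n-1}{m})<3$ --- and it usefully absorbs the $+1$ of slack from the floor in $\lfloor 3(\frac{m-7}{n})\rfloor$ that the paper's displayed estimate of $\Delta$ quietly omits.
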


\begin{proof}

 The proof is carried out similarly to Theorem \ref{nodd}. By Lemma \ref{lemmaoddeven2}, it remains to consider the case $\lfloor3(\frac{m-7}{n})\rfloor < \lambda\leq 3(\frac{m-1}{n})+3(\frac{n-1}{m})$.  We have $\Delta=(3(\frac{m-1}{n})+3(\frac{n-1}{m}))-\lfloor3(\frac{m-7}{n})\rfloor \leq 3(\frac{6}{n})+3(\frac{n-1}{m})<\frac{18}{n}+3$. Note that $\lambda$ is always odd. If $n\geq 18$, then $\Delta<4$. Thus at most two largest values of $\lambda$ that do not fit in our construction scheme. Furthermore, since $4\leq\lambda\leq n-1$, $n\geq5$. Then it remains to consider $6\leq n\leq16$, which we have $\Delta<6$. This leaves possibly at most three values of $\lambda$ that we cannot guarantee the existence of such $\GDD$.
\end{proof}

\begin{Theorem}\label{oddeven2}
Let $\lambda\geq4$ and $m>n$ be positive integers such that $m\equiv 3\pmod 6$ and $n$ is even. Let $(m,n,\lambda)\in \S$ be such that $\lambda<\lambda_{max}(m,n)-2$.
\begin{itemize}[noitemsep,topsep=0pt]
\item[$(i)$] If $n\geq \sqrt{3m}+2$, then there exists a \GDD$(m,n;3,\lambda)$ except for possibly  $\lambda=\lambda_{max}(m,n)-4$ when $6\leq n\leq 16$.
\item[$(ii)$] If $n\leq \sqrt{3m}+1$ and $\lambda\leq n-1$, then there exists a \GDD$(m,n;3,\lambda)$ except for possibly  $\lambda=\lambda_{max}(m,n)-4$ when $(m,n)\in\{(21,6),(27,6)\}$ .
\end{itemize}

\end{Theorem}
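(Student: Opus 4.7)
The plan is to reduce each part of the theorem to Lemma \ref{lemma2oddeven2}, whose sole residual exception is $\lambda=\lambda_{max}(m,n)-4$ for $6\leq n\leq 16$. The two parts differ only in how one verifies the hypothesis $\lambda\leq n-1$ needed by that lemma.

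For part (i), I would first deduce $\lambda\leq n-1$ from the geometric assumption $n\geq\sqrt{3m}+2$. Squaring gives $(n-2)^2\geq 3m$, so $m\leq (n-2)^2/3$. Substituting into (NC3) and using $3(n-1)/m<3$ yields
\[
\lambda \leq \frac{3(m-1)}{n}+\frac{3(n-1)}{m} \leq \frac{(n-2)^2-3}{n}+3 = n-1+\frac{1}{n}.
\]
Since $\lambda$ is a positive integer and $n\geq 2$, it follows that $\lambda\leq n-1$. Lemma \ref{lemma2oddeven2} then delivers the desired \GDD$(m,n;3,\lambda)$, with the inherited exception $\lambda=\lambda_{max}(m,n)-4$ for $6\leq n\leq 16$, exactly as stated in (i).

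For part (ii) the hypothesis $\lambda\leq n-1$ is assumed outright, so Lemma \ref{lemma2oddeven2} applies immediately and leaves only the value $\lambda=\lambda_{max}(m,n)-4$ potentially uncovered, and only when $6\leq n\leq 16$. To locate the truly exceptional pairs I would require that this value lie in the theorem's scope, i.e.\ $4\leq \lambda_{max}(m,n)-4\leq n-1$, equivalently $8\leq \lambda_{max}(m,n)\leq n+3$. Since $m\equiv 3\pmod 6$ and $n$ even force $\lambda_{max}(m,n)$ to be odd (from the parity table of Section~2), and since the inequality $\lambda_{max}(m,n)\leq n+3$ combined with the explicit bound $\lambda_{max}(m,n)\leq 3(m-1)/n+3(n-1)/m$ caps $m$ for each $n\in\{6,8,10,12,14,16\}$, only a short finite enumeration remains.

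The main obstacle is precisely that enumeration: one has to check, for each even $n\in\{6,8,\ldots,16\}$ and each admissible $m\equiv 3\pmod 6$ with $m>n$ and $n\leq\sqrt{3m}+1$, whether the exceptional value $\lambda_{max}(m,n)-4$ lies in $[4,n-1]$, and confirm that the only surviving pairs are $(21,6)$ and $(27,6)$.
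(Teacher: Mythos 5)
Your part $(i)$ is correct and is essentially the paper's own argument: square $n\geq\sqrt{3m}+2$ to get $3m\leq(n-2)^2$, bound $3(n-1)/m$ by $3$, conclude $\lambda\leq n-1+\tfrac{1}{n}$ and hence $\lambda\leq n-1$ by integrality, then invoke Lemma \ref{lemma2oddeven2}. No issues there.

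Part $(ii)$ has a genuine gap, in two respects. First, you defer the entire content to an enumeration that you do not perform, and you acknowledge this yourself. Second, and more seriously, the criterion you propose for that enumeration --- keep a pair $(m,n)$ exactly when $4\leq\lambda_{max}(m,n)-4\leq n-1$ --- is only a necessary condition for $\lambda_{max}(m,n)-4$ to be a genuine exception; it does not test whether that value of $\lambda$ is already handled by Lemma \ref{lemmaoddeven2}, i.e.\ whether $\lambda_{max}(m,n)-4\leq\lfloor 3(\tfrac{m-7}{n})\rfloor$. Concretely, take $(m,n)=(33,10)$: here $\lambda$ must be odd, $3(\tfrac{m-1}{n})+3(\tfrac{n-1}{m})=9.6+\tfrac{27}{33}\approx 10.42$, so $\lambda_{max}=9$ and $\lambda_{max}-4=5\in[4,n-1]=[4,9]$. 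This pair survives your filter, yet $\GDD(33,10;3,5)$ exists by Lemma \ref{lemmaoddeven2} since $5\leq\lfloor 3\cdot\tfrac{26}{10}\rfloor=7$. So your enumeration would report surviving pairs beyond $(21,6)$ and $(27,6)$ and could not ``confirm'' the stated exception set. The paper avoids this by exploiting the hypothesis $n\leq\sqrt{3m}+1$ quantitatively: it gives $3(\tfrac{n-1}{m})\leq\sqrt{27/m}$, whence $\Delta\leq\tfrac{18}{n}+\sqrt{27/m}<4$ whenever $n\geq 10$ and $m\geq 9$; since $\lambda$ is odd, $\Delta<4$ leaves at most the two values $\lambda_{max}(m,n)$ and $\lambda_{max}(m,n)-2$ uncovered, both excluded by hypothesis. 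This collapses the problematic range from $6\leq n\leq 16$ down to $n\in\{6,8\}$ before any case check, and only then does the paper inspect the few remaining small pairs. You need either this $\Delta$-based reduction or a corrected enumeration criterion that also tests membership in the interval $(\lfloor 3(\tfrac{m-7}{n})\rfloor,\,3(\tfrac{m-1}{n})+3(\tfrac{n-1}{m})]$.
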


\begin{proof}
$(i)$ If $n\geq \sqrt{3m}+2$, then $\lambda\leq 3(\frac{m-1}{n})+3(\frac{n-1}{m})\leq \frac{3}{n}(\frac{(n-2)^2}{3}-1)+3(\frac{n-1}{m})=(n-4+\frac{1}{n})+3(\frac{n-1}{m})\leq n-1+\frac{1}{n}$. But $\lambda$ is an integer, so $\lambda\leq n-1$. Therefore the statement holds by Lemma \ref{lemma2oddeven2}.

$(ii)$ Since $n\leq \sqrt{3m}+1$, we have $3(\frac{n-1}{m})\leq\sqrt{\frac{27}{m}}$. The upper bound of $\Delta$ in Lemma \ref{lemma2oddeven2} becomes $\Delta=(3(\frac{m-1}{n})+3(\frac{n-1}{m}))-\lfloor3(\frac{m-7}{n})\rfloor \leq 3(\frac{6}{n})+3(\frac{n-1}{m})<\frac{18}{n}+\sqrt{\frac{27}{m}}$.  If $n\geq 10$ and $m\geq 9$, then $\Delta <\frac{18}{n}+\frac{3\sqrt{3}}{m}<2+2=4$. Therefore, the construction works for all $\lambda\neq\lambda_{max}(m,n)$ and $\lambda\neq \lambda_{max}(m,n)-2$. Consider the remaining small $m$ and $n$. Since $4\leq\lambda\leq n-1$, $n\geq 5$; and so $m>3$. Thus $\Delta<6$. Together with the assumptions $n\leq \sqrt{3m}+1$ and $\lambda\leq 3(\frac{m-1}{n})+3(\frac{n-1}{m})$, we are able to conclude that only $(m,n)\in\{(21,6),(27,6)\}$ may or may not fit our construction scheme for \GDD$(m,n;3,\lambda)$ where $\lambda=\lambda_{max}(m,n)-4$.
\end{proof}

 It is noted that our construction in Theorem \ref{oddeven2} does not include the case when $n\leq \sqrt{3m}+1$ and $\lambda> n-1$. The existence of $\GDD$  in this case therefore remains open.
%%%%%%%%%%%%%%%%%%%%%%%%%%%%%%%%%%%%%%%%%%%%%%%%%%%%%%%%%%%%%%%%%%%%%%
\section{Conclusion and Open Problem}

Our constructions assure that the necessary conditions of the existence of our $\GDD$ in Theorem \ref{NC} are sufficient for most of the cases. Given $m$ and $n$ such that $m>n$.  When $n$ is odd, there are at most two values of $\lambda$, namely $\lambda_{max}(m,n)$ and $\lambda_{max}(m,n)-2$, that the existence of  \GDD$(m,n;3,\lambda)$ remains open. However, when $n$ is even (so, $m$ is odd), there are at most three values of $\lambda$, namely $\lambda_{max}(m,n)$, $\lambda_{max}(m,n)-2$ and $\lambda_{max}(m,n)-4$,  that the problem remains unsolved provided that $(m,n,\lambda)$ does not satisfy one of the following:
\begin{itemize}[noitemsep,topsep=0pt]
\item[ $(i)$]  $m\equiv 1 \mbox{ or } 5\pmod 6$, $n\leq\sqrt{m}$ and $\lambda>3(n-1)$.
\item[ $(ii)$] $m\equiv 3\pmod 6$, $n\leq\sqrt{3m}+1$ and $\lambda>n-1$.\\
\end{itemize}

\noindent Theorems \ref{Main1}-\ref{Main3} conclude our results along with some open problems. Recall the necessary conditions for the existence of \GDD$(m,n;3,\lambda)$ in Theorem \ref{NC} are as follows.
\begin{description}[noitemsep]
\item[     (NC1)]  $3\mid \lambda mn$,
\item[     (NC2)]  $2\mid n-1+\lambda m$ and $2\mid m-1+\lambda n$, and
\item[     (NC3)]  $\frac{\lambda}{3}\leq \frac{m-1}{n}+\frac{n-1}{m}$.
\end{description}

\begin{Theorem}\label{Main1}
Let $\lambda\geq4$ and $m>n$ be positive integers such that $n$ is odd. Then the necessary condition for the existence of \GDD$(m,n;3,\lambda)$ is also sufficient except for possibly
\begin{enumerate}[noitemsep,topsep=0pt]
\item $\GDD(m,n;3,\lambda_{max}(m,n))$, and
\item $\GDD(m,n;3,\lambda_{max}(m,n)-2)$ if $m \equiv 0,3\pmod 6$ or $n \equiv 3\pmod 6$ and $1\leq 3(\frac{n-1}{m})<2$.
\end{enumerate}
\end{Theorem}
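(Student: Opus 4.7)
The plan is to observe that Theorem \ref{Main1} is essentially a packaging of the constructions already established for odd $n$, so the proof reduces to verifying that the exceptional cases in Theorem \ref{nodd} match those stated here, and accounting for the missing endpoint $\lambda=\lambda_{max}(m,n)$.

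First I would fix $(m,n)$ with $m>n$ and $n$ odd, and let $\lambda\geq 4$ be any value for which $(m,n,\lambda)$ satisfies the three necessary conditions (NC1)--(NC3). By definition of $\lambda_{max}(m,n)$, such a $\lambda$ must lie in $\{\lambda'\in\N : (m,n,\lambda')\in\S,\ \lambda'\leq\lambda_{max}(m,n)\}$. The argument then splits naturally at $\lambda=\lambda_{max}(m,n)$: for the strict inequality $\lambda<\lambda_{max}(m,n)$ I would invoke Theorem \ref{nodd} directly, while for $\lambda=\lambda_{max}(m,n)$ I would simply note that no construction has been provided, which accounts for exception (1) in the statement.

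Next I would transcribe the exceptional case from Theorem \ref{nodd} into the language of Theorem \ref{Main1}. Theorem \ref{nodd} fails to guarantee \GDD$(m,n;3,\lambda)$ only when $\lambda=\lambda_{max}(m,n)-2$, $m\equiv 0,3\pmod 6$ or $n\equiv 3\pmod 6$, and $1\leq 3(\frac{n-1}{m})<2$. This is precisely exception (2). One should also sanity-check that the two sub-parities of $m$ are truly covered: if $m$ is odd, Lemma \ref{lemmaoddodd} handles the range $\lambda\leq\lfloor 3(\frac{m-1}{n})\rfloor$, and if $m$ is even (so $m\equiv 0,2,4\pmod 6$), Lemma \ref{lemmaevenodd} does so; Theorem \ref{nodd} then extends both through the ``small'' gap $\Gamma(m,n)$ of size at most 2 between $\lfloor 3(\frac{m-1}{n})\rfloor$ and $3(\frac{m-1}{n})+3(\frac{n-1}{m})$.

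The main (and essentially only) subtle point is making the translation between ``the only possibly missing values in $\Gamma(m,n)$ are $\lambda_{max}(m,n)$ and $\lambda_{max}(m,n)-2$'' and the clean statement of Theorem \ref{Main1}. In particular, one needs to confirm that when Case 1 of Theorem \ref{nodd} applies (that is, $n\not\equiv 3\pmod 6$ and $m\not\equiv 0,3\pmod 6$), the set $\Gamma(m,n)$ has cardinality at most $1$, so only $\lambda=\lambda_{max}(m,n)$ is potentially missed, matching the fact that exception (2) only triggers under the complementary parity condition. After this bookkeeping the theorem follows by a direct appeal to Theorem \ref{nodd}. I do not anticipate any genuine obstacle beyond this unpacking of cases, since the real constructive content lives in Lemmas \ref{lemmaoddodd}--\ref{lemmaevenodd} and Theorem \ref{nodd}.
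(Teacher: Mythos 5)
Your proposal is correct and matches the paper's (implicit) argument: Theorem \ref{Main1} is stated in the conclusion as a direct repackaging of Theorem \ref{nodd}, with any admissible $\lambda$ being at most $\lambda_{max}(m,n)$ by definition, the endpoint $\lambda=\lambda_{max}(m,n)$ absorbed into exception (1), and the exceptional case of Theorem \ref{nodd} transcribed verbatim as exception (2). No further comment is needed.
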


\begin{Theorem}\label{Main2}
Let $\lambda\geq4$ and $m>n$ be positive integers such that $m\equiv 1 \mbox{ or } 5\pmod 6$ and $n$ is even. Then the necessary condition for the existence of \GDD$(m,n;3,\lambda)$ is also sufficient except for possibly
\begin{enumerate}[noitemsep,topsep=0pt]
\item $\GDD(m,n;3,\lambda_{max}(m,n))$,
\item $\GDD(m,n;3,\lambda_{max}(m,n)-2)$ if $n\equiv 0\pmod 6$ and $n\geq\sqrt{m}+1$, and
\item $\GDD(m,n;3,\lambda)$ if $n\leq\sqrt{m}$ and $\lambda>3(n-1)$.
\end{enumerate}
\end{Theorem}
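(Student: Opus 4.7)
The plan is to deduce Theorem \ref{Main2} as a direct collation of Theorem \ref{oddeven1} with the necessary conditions of Theorem \ref{NC}. By Theorem \ref{NC}, any admissible triple $(m,n,\lambda)$ must already satisfy $(m,n,\lambda)\in\S$ and $\lambda\leq\lambda_{max}(m,n)$; under the present hypotheses $m\equiv 1\text{ or }5\pmod 6$ and $n$ is even, so constructing the GDDs reduces entirely to invoking Theorem \ref{oddeven1}, whose two parts are distinguished by the comparison of $n$ with $\sqrt{m}$.

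I would then branch on the size of $n$ relative to $m$. If $n\geq\sqrt{m}+1$, Theorem \ref{oddeven1}$(i)$ provides a \GDD$(m,n;3,\lambda)$ for every $\lambda<\lambda_{max}(m,n)$ except possibly when $\lambda=\lambda_{max}(m,n)-2$ and $n\equiv 0\pmod 6$, which matches exception (2) exactly. If $n\leq\sqrt{m}$, Theorem \ref{oddeven1}$(ii)$ yields the design whenever $\lambda\leq 3(n-1)$, and the uncovered triples with $\lambda>3(n-1)$ form exception (3). Since neither part of Theorem \ref{oddeven1} ever handles $\lambda=\lambda_{max}(m,n)$ itself, this top value is left open across both ranges and accounts for exception (1).

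The proof is thus essentially bookkeeping, with no serious obstacle beyond verifying that the three exception lists genuinely exhaust all admissible triples not already handled by Theorem \ref{oddeven1}. The one subtle point worth checking explicitly is that the partition into $n\geq\sqrt{m}+1$ and $n\leq\sqrt{m}$, together with the additional restriction $\lambda\leq 3(n-1)$ in the second range, does in fact cover every $(m,n,\lambda)$ with $\lambda\geq 4$ satisfying (NC1)--(NC3); for integer $n$ with $\sqrt{m}<n<\sqrt{m}+1$ one falls back on the observation that the admissible $\lambda$ range is so restricted that only $\lambda=\lambda_{max}(m,n)$ itself survives, which is already listed as exception (1). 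Once this short arithmetic check is made, the theorem follows immediately by assembling the above cases.
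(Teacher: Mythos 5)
Your proposal matches the paper exactly: Theorem \ref{Main2} appears in the conclusion with no separate proof, being precisely the collation of Theorem \ref{NC} with the two parts of Theorem \ref{oddeven1} that you describe, and your case analysis reproduces the three exception lists correctly. The only place you go beyond the paper is the boundary strip $\sqrt{m}<n<\sqrt{m}+1$, which the paper silently ignores; your claim there is correct but would need the short verification that every admissible $\lambda<\lambda_{max}(m,n)$ in that strip satisfies both $\lambda\leq 3(n-1)$ and $\lambda\leq\lfloor 3(\frac{m-3}{n})\rfloor$, so that Lemma \ref{lemmaoddeven1} applies directly and one avoids the $\lambda_{max}(m,n)-2$ caveat of Lemma \ref{lemma2oddeven1}, which item (2) of the statement does not cover when $n<\sqrt{m}+1$.
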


\begin{Theorem}\label{Main3}
Let $\lambda\geq4$ and $m>n$ be positive integers such that $m\equiv 3\pmod 6$ and $n$ is even. Then the necessary condition for the existence of \GDD$(m,n;3,\lambda)$ is also sufficient except for possibly
\begin{enumerate}[noitemsep,topsep=0pt]
\item $\GDD(m,n;3,\lambda_{max}(m,n))$ and $\GDD(m,n;3,\lambda_{max}(m,n)-2)$,
\item $\GDD(m,n;3,\lambda_{max}(m,n)-4)$ if $n\geq\sqrt{3m}+2$ and $6\leq n\leq16$.
\item $\GDD(m,n;3,\lambda_{max}(m,n)-4)$ if $(m,n)\in\{(21,6),(27,6)\}$, and
\item $\GDD(m,n;3,\lambda)$ if $n\leq\sqrt{3m}+1$ and $\lambda>n-1$.

\end{enumerate}
\end{Theorem}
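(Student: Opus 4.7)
The plan is to derive Theorem \ref{Main3} as a summary of Theorem \ref{oddeven2}, assembling the exception list by matching each clause to the appropriate hypothesis restriction. No new construction is needed: the work is a case analysis on $n$ together with the observation that once $m \equiv 3 \pmod 6$ and $n$ is even, the congruence content of (NC1) and (NC2) collapses to ``$\lambda$ odd'', so the only active necessary condition beyond $\lambda \geq 4$ is $\lambda \leq \lambda_{max}(m,n)$.

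First I would split on the size of $n$ relative to $\sqrt{3m}$, mirroring the dichotomy in Theorem \ref{oddeven2}. In the regime $n \geq \sqrt{3m}+2$, Theorem \ref{oddeven2}(i) constructs a \GDD$(m,n;3,\lambda)$ for every admissible $\lambda < \lambda_{max}(m,n) - 2$, except possibly $\lambda = \lambda_{max}(m,n) - 4$ when $6 \leq n \leq 16$. The two top admissible values $\lambda_{max}(m,n)$ and $\lambda_{max}(m,n)-2$ sit outside the hypothesis of Theorem \ref{oddeven2} and therefore land in exception clause (1) of Main3; the residual $\lambda_{max}(m,n)-4$ issue becomes clause (2).

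In the regime $n \leq \sqrt{3m}+1$, I would further split on whether $\lambda \leq n-1$. If so, apply Theorem \ref{oddeven2}(ii): this handles every admissible $\lambda < \lambda_{max}(m,n)-2$ except possibly $\lambda = \lambda_{max}(m,n)-4$ when $(m,n) \in \{(21,6),(27,6)\}$, feeding into clauses (1) and (3). If instead $\lambda > n-1$, Theorem \ref{oddeven2}(ii) supplies no construction, and this parameter window is declared as clause (4).

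The main obstacle is essentially bookkeeping: verifying that the union of the two top-$\lambda$ values excluded by the hypothesis $\lambda < \lambda_{max}(m,n)-2$ of Theorem \ref{oddeven2}, the $\lambda_{max}(m,n)-4$ exceptions contributed by its two sub-cases, and the $\lambda > n-1$ regime for small $n$, reproduces exactly clauses (1)--(4) of Theorem \ref{Main3}. A minor subtlety is the borderline zone $\sqrt{3m}+1 < n < \sqrt{3m}+2$, which can occasionally contain an even integer; for each such $n$ one should either check that Theorem \ref{oddeven2}(i) still applies or verify that the parameters fall under clause (4). I expect this verification to be short and to involve no new design-theoretic ingredients.
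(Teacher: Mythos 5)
Your proposal matches the paper exactly: the paper gives no separate proof of Theorem \ref{Main3}, treating it as a restatement of Theorem \ref{oddeven2} in which the hypothesis $\lambda<\lambda_{max}(m,n)-2$ becomes exception clause (1), the residual $\lambda_{max}(m,n)-4$ cases of parts (i) and (ii) become clauses (2) and (3), and the uncovered regime $\lambda>n-1$ becomes clause (4). Your extra remark about even $n$ falling strictly between $\sqrt{3m}+1$ and $\sqrt{3m}+2$ (e.g.\ $m=39$, $n=12$) identifies a small case the paper's dichotomy silently omits, which is worth checking but does not change the approach.
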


 Note that the question whether the constructions for the remaining cases are possible still remains unsolved.


\begin{thebibliography}{200}

\bibitem{AL} B. Alspach, Research Problem 3, \textit{Discrete Math.} 36 (1981) 333.

\bibitem{BA} P. Balister, On the Alspach conjecture, \textit{Combin. Probab. Comput.} 10 (2001), 95--125.

\bibitem{BO} R.C. Bose and T. Shimamoto, Classification and analysis of partially balanced incomplete block designs with two associate classes, \textit{J. Amer. Statist. Assoc.} 47(1952), 151--184.

\bibitem{HC} D. Bryant, D. Horsley and W. Pettersson, Cycle decompositions V: Complete graphs into cycles of arbitrary lengths, \textit{Proc. London Math. Soc.} (3), (2013) doi: 10.1112/plms/pdt051.

\bibitem{1L} A. Chaiyasena, S.P. Hurd, N. Punnim and D.G.Sarvate. Group divisible design with two association classes, \textit{J. Combin. Math. Combin. Comput.} 82(1) (2012), 179--198.

\bibitem{12G} S. I. El-Zanati, N. Punnim and C. A. Rodger, Gregarious GDDs with Two Associate Classes, \textit{Graphs Combin.} 26(6) (2010),775--780.

\bibitem{D1} W. Lapchinda and N. Pabhapote, Group divisible designs with two associate classes and $\lambda_1-\lambda_2=1$ , \textit{Int. J. Pure Appl. Math.} 54, No.4 (2009), 601--608.

\bibitem{LR} C. C. Lindner and C. A. Rodger, \textit{Design Theory} (CRC Press, Boca Raton, 1997).

\bibitem{nittiya} N. Pabhapote, Group divisible designs with two associate classes and with two uniqual groups, \textit{Int. J. Pure Appl. Math.} \textbf{81(1)} (2012), 191--198.

\bibitem{L23} N. Pabhapote and A. Chaiyasena, Group Divisible Designs with Two Associate Classes and $\lambda_2=3$, \textit{Int. J. Pure Appl. Math.}  71, No.3 (2011), 455--463.

\bibitem{L21} N. Pabhapote and N. Punnim, Group Divisible Designs with Two Associate Classes and $\lambda_2=1$, \textit{Int. J. Math. Math. Sci.} (2011),  Article ID 148580, 10 pages, doi:10.1155/2011/148580.

\bibitem{12} N. Punnim and C. Uiyyasathian, Group Divisible Designs with Two Associate Classes and $(\lambda_1,\lambda_2)=(1,2)$, \textit{J. Combin. Math. Combin. Comput.} 82(1) (2012), 117--130.

\bibitem{14} P. Sinsap and C. Uiyyasathian, Group Divisible Designs \GDD$(m,n;2,\lambda)$, submitted.


\bibitem{13} C. Uiyyasathian and N. Punnim, Some construction of Group Divisible Designs \GDD$(m,n;1,3)$, \textit{Int. J. Pure Appl. Math.}
    104(1)(2015), 19--28.



\end{thebibliography}
\end{document}